\newtheorem{theorem}{Theorem}
\newtheorem{lemma}[theorem]{Lemma}
\newtheorem{proposition}[theorem]{Proposition}
\newtheorem*{remark}{Remark}
\theoremstyle{remark}
\newcommand{\F}{\mathbb{F}_p}
\newcommand{\Ff}{\mathbb{F}^\times_p}
\newcommand{\ds}{\displaystyle}
\newcommand{\g}[1]{\left(\frac{#1}{p}\right)}
\newcommand{\sm}[4]{\left(\begin{smallmatrix}#1&#2\\ #3&#4 \end{smallmatrix}
\right)}
\newcommand{\Q}{\mathbb{Q}}
\newcommand{\Gal}{\textrm{Gal}(\overline{\Q}/\Q)}
\newcommand{\SL}{{\text {\rm SL}}}
\def\Z{\mathbb{Z}}
\def\E{\tilde{E}_t(\F)}
\begin{document}

\title{Diophantine $m$-tuples in finite fields and modular forms}
\author{Andrej Dujella}
\email{duje@math.hr}
\address{Department of Mathematics, Faculty of Science, University of Zagreb, Bijeni\v{c}ka cesta 30, 10000 Zagreb, Croatia}
\author{Matija Kazalicki}
\email{matija.kazalicki@math.hr}%
\address{Department of Mathematics, Faculty of Science, University of Zagreb, Bijeni\v{c}ka cesta 30, 10000 Zagreb, Croatia}

\classification{11G25, 11F80 (primary), 	11T24 (secondary)}
\keywords{Diophantine $m$-tuples, modular forms, finite fields, elliptic curves}

\begin{abstract}
For a prime $p$, a Diophantine $m$-tuple in $\F$ is a set of $m$ nonzero elements of $\F$ with the property that the product of any two of its distinct elements is one less than a square.

In this paper, we present formulas for the number $N^{(m)}(p)$ of Diophantine $m$-tuples in $\F$ for $m=2,3$ and $4$. Fourier coefficients of certain modular forms appear in the formula for the number of Diophantine quadruples.

We prove that asymptotically $N^{(m)}(p)=\ds \frac{1}{2^{m \choose 2 }}\frac{p^m}{m!} + o(p^m)$, and also show that if $p>2^{2m-2}m^2$, then there is at least one Diophantine $m$-tuple in $\F$.
\end{abstract}

\maketitle

\section{Introduction}
A Diophantine $m$-tuple is a set of $m$ positive
integers with the property that the product of any two of its distinct
elements is one less than a square. If a set of nonzero rationals
has the same property, then it is called
a rational Diophantine $m$-tuple.
Diophantus of Alexandria found the first example of a rational Diophantine quadruple
$\{1/16, 33/16, 17/4, 105/16\}$, while the first Diophantine quadruple in integers was
found by Fermat, and it was the set $\{1,3,8,120\}$.
It was proved in \cite{D-crelle} that an integer Diophantine sextuple does not exist and that there are
only finitely many such quintuples. A folklore conjecture is that there does not exist an integer Diophantine quintuple. On the other hand, it was shown in \cite{DKMS} that there are infinitely many rational Diophantine sextuples (for another construction see \cite{DK}), and it is not known if there are rational Diophantine septuples. For a short survey on Diophantine $m$-tuples see \cite{Notices}.

One can study Diophantine $m$-tuples over any commutative ring with unity. In this paper, we consider Diophantine $m$-tuples in finite fields $\F$, where $p$ is an odd prime. In this setting, it is natural to ask about the number $N^{(m)}(p)$ of Diophantine $m$-tuples with elements in $\F$ (we consider $0$ to be a square in $\F$).

Since half of the elements of $\Ff$ are squares, heuristically, one expects that a randomly chosen $m$-tuple of different elements in $\Ff$ will have the Diophantine property with probability $\ds \frac{1}{2^{m \choose 2 }}$, i.e. we expect $N^{(m)}(p)=\ds \frac{1}{2^{m \choose 2 }}\frac{p^m}{m!} + o(p^m)$. We prove this asymptotic formula at the end of Section \ref{sec:gen}.

The main theorem of the paper gives an exact formula for the number of Diophantine quadruples $N^{(4)}(p)$ given in terms of the Fourier coefficients of the following modular forms.

Let
\begin{eqnarray*}
\ds f_1(\tau)&=&\sum_{n=1}^\infty a(n) q^n \in S_2\left(\Gamma_0(32)\right),\\
\ds f_2(\tau)&=&\sum_{n=1}^\infty b(n) q^n \in S_3\left(\Gamma_0(8), \left(\frac{-2}{\bullet} \right)\right),\\
\ds f_3(\tau)&=&\sum_{n=1}^\infty c(n) q^n \in S_3\left(\Gamma_0(16), \left(\frac{-4}{\bullet} \right)\right),\\
\ds f_4(\tau)&=&\sum_{n=1}^\infty d(n) q^n \in S_4\left(\Gamma_0(8)\right),\\
\ds f_5(\tau)&=&\sum_{n=1}^\infty e(n) q^n \in S_5\left(\Gamma_0(4),\left(\frac{-4}{\bullet} \right) \right),\\
\end{eqnarray*}
 be (the unique rational) newforms in the corresponding spaces of modular forms.
Here $S_k(\Gamma_0(N), \chi)$ denotes the space of cusp forms of weight $k$, level $N$ and nebentypus $\chi$.
Note that all modular forms except $f_4(\tau)$ are CM forms so we have explicit formulas for their Fourier coefficients which are given in Section \ref{sec:forms}.

\begin{theorem} \label{thm:main} Let $p$ be an odd prime. Denote by $q(p)=e(p)-6d(p)+24b(p)-24c(p)$. Then, the number $N^{(4)}(p)$ of Diophantine quadruples over $\F$ is given by the following formula:
\[
	N^{(4)}(p)=
	\begin{cases}
		\frac{1}{24\cdot 64}\left(p^4-24p^3+206p^2-650p + 477+q(p)\right), \textrm{ if } p \equiv 1 \pmod{8},\\
		\frac{1}{24\cdot 64}\left(p^4 - 24p^3 + 236p^2 - 1098p + 1761 + q(p) \right), \textrm{ if } p \equiv 3 \pmod{8},\\
		\frac{1}{24\cdot 64} \left( p^4 - 24p^3 + 206p^2 - 698p + 573+q(p)\right), \textrm{ if } p \equiv 5 \pmod{8},\\
		\frac{1}{24\cdot 64} \left(p^4 - 24p^3 + 236p^2 - 1050p + 1761 +q(p)\right), \textrm{ if } p \equiv 7 \pmod{8}.\\
	\end{cases}
\]
\end{theorem}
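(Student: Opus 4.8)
The plan is to count Diophantine quadruples by reducing everything to point-counts on auxiliary curves and then recognizing those point-counts as Fourier coefficients of the listed CM and non-CM newforms.

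The starting point is the condition itself: a set $\{a_1,a_2,a_3,a_4\}\subset\Ff$ is a Diophantine quadruple precisely when each of the six products $a_ia_j-1$ is a square in $\F$. The natural first step is to rewrite the count $N^{(4)}(p)$ using the quadratic character $\chi(x)=\g{x}$, so that the indicator of ``$a_ia_j-1$ is a nonzero square'' becomes $\tfrac12(1+\chi(a_ia_j-1))$, with a correction term accounting for the cases $a_ia_j-1=0$ (recall $0$ counts as a square). Expanding the product of the six such factors over the $\binom{4}{2}=6$ pairs produces a sum of $2^6$ character-sum terms, ranging from the trivial leading term (which yields the main polynomial $p^4/(24\cdot 64)$ after dividing by $4!$ for the unordered set and by $2^6$) down to a single six-fold character sum. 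Each term is a multiple character sum $\sum_{a_1,\dots,a_4}\prod \chi(a_ia_j-1)$ over some subset $S$ of the six pairs.

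The heart of the argument is evaluating these character sums. The plan is to organize them by the combinatorial type of the graph $G$ on four vertices whose edges are the chosen pairs in $S$; by symmetry, sums with isomorphic graphs coincide. The low-complexity graphs (few edges, or disconnected pieces) collapse to products of Jacobi-type sums and elementary character sums, contributing the lower-order polynomial terms in $p$ (with the residue of $p\pmod 8$ entering through values like $\chi(-1),\chi(2)$ and small Gauss/Jacobi sums). The genuinely interesting contributions come from the dense graphs, where after fixing some variables the remaining sum counts $\F$-points on an affine curve cut out by equations $a_ia_j-1=\square$; completing these to smooth projective models, one identifies hyperelliptic or elliptic curves whose number of points is governed by $a(p),b(p),c(p),d(p),e(p)$. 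In particular the weight-$4$ form $f_4$ (the only non-CM form) must arise from a genus-one or genus-two fibration whose traces of Frobenius are $d(p)$, while the CM forms $f_1,f_2,f_3,f_5$ come from curves with extra automorphisms or from symmetric-power contributions of a single CM elliptic curve.

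The main obstacle, and where the real work lies, is twofold. First, I expect difficulty in exactly matching the top-complexity character sums to the specific modular forms: one must produce explicit curves/varieties, verify that their $L$-functions are the $L$-functions of $f_1,\dots,f_5$ (e.g.\ via the explicit CM formulas promised in Section~\ref{sec:forms} and via point-counting/modularity for $f_4$), and pin down the exact integer coefficients $-6,24,-24$ and the single coefficient $1$ on $e(p)$ that assemble into $q(p)$. Second, the residue-dependent constants ($477,1761,573,1761$ and the distinct $p^2$- and $p$-coefficients) must be tracked with care: they accumulate from the boundary/degenerate contributions (points at infinity, the $a_ia_j-1=0$ exceptional loci, coincidences among the $a_i$, and values of small Gauss sums depending on $p\bmod 8$), so the bookkeeping across the four congruence classes is the most error-prone part. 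I would proceed case by case in $p\bmod 8$, accumulate each graph-type's contribution symbolically, and only at the end collect terms, checking the final formula against directly computed values of $N^{(4)}(p)$ for several small primes in each class as a consistency test.
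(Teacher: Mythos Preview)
Your proposal takes a genuinely different route from the paper. You expand the six-fold product $\prod_{i<j}\bigl(1+\chi(a_ia_j+1)\bigr)$ (note: the condition is $a_ia_j+1$ a square, not $a_ia_j-1$) and organize the $2^6$ resulting character sums by graph type; the paper instead fibers the whole problem over the parameter $t=abcd$, observes that a Diophantine quadruple with $abcd=t$ corresponds to an \emph{admissible triple} of points on the curve $\mathcal{D}_t:(x^2-1)(y^2-1)=t$, and that $\mathcal{D}_t$ is birational to the fiber $E_t$ of the universal elliptic curve over $X_1(4)$. The count of Diophantine quadruples then becomes a sum of expressions in $P(t)=\#E_t(\F)$ over various subsets $T_i\subset\F$ cut out by level structure on $E_t$, and these sums are evaluated as traces of Frobenius on $H^1_{et}(X(\Gamma),i_*\mathrm{Sym}^k\mathcal{F}_\ell)$ for $\Gamma\in\{\Gamma_1(4),\Gamma(2,4),\Gamma_1(8),\Gamma(2,8)\}$ and $k\le 3$, which by Proposition~\ref{prop:trag} equal Hecke-eigenvalue sums in the spaces $S_{k+2}(\Gamma)$. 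The paper explicitly remarks that the direct character-sum method you propose, while used for $N^{(2)}(p)$ and $N^{(3)}(p)$ in Section~\ref{sec:gen}, ``becomes too involved'' already for $m=4$.

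There is also a real gap in your plan. You say that for the dense subgraphs, after fixing some variables, the remaining sum counts points on a \emph{curve}. But the term corresponding to the complete graph $K_4$ is a four-variable character sum; no sequence of one-variable reductions yields an exact closed form, and the object you are counting is (after projectivizing) a threefold, not a curve. Correspondingly, the coefficient $e(p)$ is the $p$-th Fourier coefficient of a weight-$5$ newform, i.e.\ it arises from a motive of weight $4$ --- in the paper it comes from $\mathrm{Sym}^3$ of the relative $H^1$ of the universal elliptic curve over $X_1(4)$, not from $H^1$ of any curve. Your proposal does not indicate how a weight-$5$ (or weight-$4$, for $d(p)$) form would emerge from point counts on curves, and as stated it cannot. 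If you pursued the character-sum route seriously you would likely be forced to reintroduce the fibration over $t=abcd$ and rediscover the paper's setup anyway.
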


An elementary upper bound for the Fourier coefficients of cusp forms (see Chapter 5 of \cite{Iwa}) implies that $q(p) = O(p^{5/2})$, so we have $N^{(4)}(p)=\frac{1}{24\cdot 64}p^4+O(p^3)$, which is consistent with the heuristics mentioned earlier.

 In addition to this, using a more elementary approach of character sums, in Propositions \ref{prop:pairs} and \ref{prop:triples} we derive formulas for the number of Diophantine pairs $N^{(2)}(p)$ and the number of Diophantine triples $N^{(3)}(p)$ in $\F$. Already for $m=4$ this method becomes too involved.

For a general $m$ it is natural to ask how large $p$ must be so that there is at least one Diophantine $m$-tuple in $\F$. In Theorem \ref{tm:mtuples}, we prove that this is the case if $p>2^{2m-2}m^2$.

The rest of the paper is organized as follows.
In Section \ref{sec:cor}, we construct a correspondence between the set of Diophantine quadruples $\{a,b,c,d\}$ and the set of admissible triples $(Q_1, Q_2, Q_3)$ of $\F$-points on the curve $\mathcal{D}_t: (x^2-1)(y^2-1)=t$, for some $t \in \F^\times$ such that each admissible triple corresponds to one or two Diophantine quadruples. If $t\ne 0,1$, the curve $\mathcal{D}_t$ is birationally equivalent to the elliptic curve $E_t: V^2 = U^3 -2(t-2)U^2 + t^2 U$, with the distinguished point $R=(t,2t)$ of order $4$. Hence we identify $\mathcal{D}_t(\F)$ with $\E:=E_t(\F)\setminus\{\mathcal{O}, R, 2R, 3R\}$.

If $t\ne 0,1$ we say that the triple $(Q_1, Q_2, Q_3)$ of points on $\E$ is admissible if and only if $x(Q_1+Q_2+Q_3+R)$ is a square, and if for no two $Q_i$ and $Q_j$ with $i \ne j$,  we have that $Q_i=\pm Q_j+kR$, where $k \in \{0,1,2,3\}$. For the definition of admissibility when $t=1$ see the end of Section \ref{sec:cor}.

In Section \ref{sec:counting}, we find a formula for $N^{(4)}(p)$ by counting admissible triples on $\E$ for each $t$ (see Propositions \ref{prop:main} and \ref{prop:W1}). The formula can be written as a linear combination of sums of the form $\sum_{t\in X(\F)} P(t)^k$, where $X$ is one of the modular curves (for definitions see Section \ref{sec:curves} ) $$X_1(4), X_1(8), X(2,4), X(2,8), X(4,8)$$
and $P(t)$ is the number of $\F$-rational points on the fiber above $t$ of the universal elliptic curve over the modular curve $X$, and $k\in \{0,1,2,3\}$.

In Section \ref{sec:universal}, using universal elliptic curves over the modular curves introduced above, we define certain compatible families of $\ell$-adic Galois representations such that the trace of Frobenius $Frob_p$ under these representations is essentially equal to the sums above. On the other hand, these representations are isomorphic to the $\ell$-adic realisations of the motives associated to the spaces of cusps forms of weight $k+2$ on the corresponding groups, which enables us to express the traces of Frobenius in terms of the coefficients of the Hecke eigenforms in those spaces.

In Section \ref{sec:results}, using the methods from the previous section we calculate in Propositions \ref{prop:prva}-\ref{prop:zadnja} the sums from the formula for $N^{(4)}(p)$, and prove Theorem \ref{thm:main}.

By using character sums (Weil's estimates), in Section \ref{sec:gen} we obtain formulas for $N^{(2)}(p)$ and $N^{(3)}(p)$, and prove Theorem \ref{tm:mtuples} together with an asymptotic formula for $N^{(m)}(p)$.

\section{Correspondence}\label{sec:cor}

Let $\{a,b,c,d\}$ be a Diophantine quadruple with elements in $\F$, and let
\begin{align*}
ab+1=t_{12}^2, \quad ac+1&=t_{13}^2, \quad ad+1=t_{14}^2,\\
bc+1=t_{23}^2, \quad bd+1&=t_{24}^2, \quad cd+1=t_{34}^2.
\end{align*}

It follows that $(t_{12},t_{34},t_{13},t_{24},t_{14},t_{23}, t=abcd)\in {\F}^7$ defines a point on an algebraic variety $\mathcal{C}$ over $\F$  defined by the following equations:
\begin{align*}
(t_{12}^2-1)(t_{34}^2-1)&=t\\
(t_{13}^2-1)(t_{24}^2-1)&=t\\
(t_{14}^2-1)(t_{23}^2-1)&=t.
\end{align*}

Conversely, the points $(\pm t_{12},\pm t_{34},\pm t_{13},\pm t_{24},\pm t_{14},\pm t_{23}, t)\in \mathbb{F}^7_p$ on $\mathcal{C}$ determine two Diophantine quadruples $\pm\{a,b,c,d\}$ (or one if $\{a,b,c,d\}=\{-a,-b,-c,-d\}$), provided that the elements $a,b,c$ and $d$ are $\F$-rational, distinct and non-zero. Here, we take $a=\sqrt{(t_{12}^2-1)(t_{13}^2-1)/(t_{23}^2-1)}$ to be any square root, while $b, c$ and $d$ are defined using identities $ab+1=t_{12}^2$, $ac+1=t_{13}^2$ and $ad+1=t_{14}^2$. It follows from this definition and the equations defining $\mathcal{C}$ that $bc+1=t_{23}^2$, $bd+1=t_{24}^2$ and $cd+1=t_{34}^2$.
Also, if only one element of quadruple is $\F$-rational, then all the elements are $\F$-rational.

The projection $(t_{12},t_{34},t_{13},t_{24},t_{14},t_{23}, t) \mapsto t$ defines a fibration of $\mathcal{C}$ over the projective line, and the generic fiber is a cube of $\mathcal{D}_t: (x^2-1)(y^2-1)=t$. Any point on $\mathcal{C}$ corresponds to the three points $Q_1=(t_{12},t_{34})$, $Q_2=(t_{13},t_{24})$ and $Q_3=(t_{14}, t_{23})$ on $\mathcal{D}_t$. The elements of a quadruple corresponding to these three points are distinct if and only if no two of these points can be transformed from one to another by changing signs and switching coordinates (e.g. for the points $(t_{12}, t_{34})$, $(-t_{34}, t_{12})$ and $(t_{14},t_{23})$, we have that $a=d$).

The curve $\mathcal{D}_t$ for $t\in \F$ is birationally equivalent to the curve
$$E_t: V^2 = U^3 -2(t-2)U^2 + t^2 U.$$
The map is given by $U = 2(x^2-1)y+2x^2-(2-t)$,
and $V = 2Ux$.
The family $E_t$ over the $t$-line together with $R=(t,2t)$, the point of order $4$, is the universal elliptic curve over the modular curve $X_1(4)$(we identify $\mathbb{P}^1$ with $X_1(4)$ such that cusps of $X_1(4)$ correspond to $t=0, 1$ and $\infty$).
It is easy to see that the affine $\F$-points on the curve $\mathcal{D}_t$ are in $1-1$ correspondence with the set $\E := E_t(\F)\setminus \{\mathcal{O}, R, 2R, 3R \}$.

If $t\ne 0, 1$ the curve $E_t$ is an elliptic curve, so in our analysis of Diophantine quadruples we will naturally distinguish two cases $t=1$ and $t \ne 0, 1$ (note that $t=0$ would imply that one of the elements in quadruple $\{a,b,c,d\}$ is zero).

If $t=1$ then there is a singular point $(-1,0)$ on the curve $E_1: V^2=U(U+1)^2$ which corresponds to the point $(0,0)$ on $\mathcal{D}_1$.

If $Q\in \E$ is the nonsingular point that corresponds to the point $(x,y)\in \mathcal{D}_t(\F)$, then a direct calculation shows that the points $-Q$ and $Q+R$ correspond to the points $(-x,y)$ and $(y,-x)$ respectively. Hence the following lemma follows.

\begin{lemma}\label{lem:dis}
Let $t \in \F^{\times}$. The triple $(Q_1,Q_2,Q_3)\in \E^3$ corresponds to the quadruple whose elements are not distinct if and only if there are two nonsingular points, $Q_i$ and $Q_j$ with $i \ne j$ such that $Q_i=\pm Q_j+kR$, where $k \in \{0,1,2,3\}$ or if at least two points in the triple are singular.
\end{lemma}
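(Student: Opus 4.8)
The plan is to turn the geometric obstruction described just before the lemma — ``two of the points $Q_i,Q_j$ differ by a sign change and/or a coordinate swap'' — into the group-law statement on $\E$, and then to match the resulting degeneracies against the coincidences among $a,b,c,d$, treating the singular fibre $t=1$ at the end. First I would identify the symmetry group of $\mathcal{D}_t$. Since $(x^2-1)(y^2-1)=t$ depends only on $x^2,y^2$ and is symmetric in $x,y$, it is preserved by all eight \emph{signed swaps} $(x,y)\mapsto(\pm x,\pm y)$ and $(x,y)\mapsto(\pm y,\pm x)$, a dihedral group of order $8$ generated by $(x,y)\mapsto(-x,y)$ and $(x,y)\mapsto(y,-x)$. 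By the correspondence recorded before the lemma, on nonsingular points these generators are $Q\mapsto -Q$ and $Q\mapsto Q+R$; since $R$ has order $4$ and $-(Q+R)=(-Q)-R$, they generate a dihedral group of order $8$, namely $\{Q\mapsto\pm Q+kR:k\in\{0,1,2,3\}\}$. Hence for nonsingular $Q_i,Q_j$ the relation $Q_i=\pm Q_j+kR$ holds for some $k$ \emph{if and only if} the coordinates of $Q_i$ arise from those of $Q_j$ by a signed swap.

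Next I would make precise the assertion that distinctness is equivalent to no such signed-swap relation. The three points record the three ways of splitting $\{a,b,c,d\}$ into pairs: $Q_1=(t_{12},t_{34})$ gives $\{a,b\},\{c,d\}$, $Q_2=(t_{13},t_{24})$ gives $\{a,c\},\{b,d\}$, and $Q_3=(t_{14},t_{23})$ gives $\{a,d\},\{b,c\}$. A coincidence, say $a=b$, forces $t_{13}^2=t_{23}^2$ and $t_{14}^2=t_{24}^2$, i.e.\ $t_{13}=\pm t_{23}$, $t_{24}=\pm t_{14}$, so $Q_2=(t_{13},t_{24})$ is a signed swap of $Q_3=(t_{14},t_{23})$; conversely a signed-swap relation between $Q_2,Q_3$ reads either $t_{13}=\pm t_{23},\,t_{24}=\pm t_{14}$ (forcing $a=b$) or $t_{13}=\pm t_{14},\,t_{24}=\pm t_{23}$ (forcing $c=d$). (All cancellations are legitimate since $abcd=t\ne 0$.) Running through the six coincidences one finds the clean pattern that $Q_i,Q_j$ are related by a signed swap exactly when the two elements of one pair in the \emph{remaining} partition $Q_k$ coincide:
\begin{gather*}
\{Q_2,Q_3\}\leftrightarrow\{a=b\text{ or }c=d\},\qquad \{Q_1,Q_3\}\leftrightarrow\{a=c\text{ or }b=d\},\\
\{Q_1,Q_2\}\leftrightarrow\{a=d\text{ or }b=c\}.
\end{gather*}
Together with the first paragraph this settles the equivalence whenever $t\ne1$, where $E_t$ is smooth and every point is nonsingular.

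Finally I would fit the case $t=1$ into the statement. Here the only singular point of $E_1$ corresponds to $(0,0)\in\mathcal{D}_1$, and a point is singular exactly when both products in its partition equal $-1$ (e.g.\ $Q_1=(0,0)$ means $ab=cd=-1$). The key observation is that a coincidence cannot leak into a \emph{single} singular point: if $a=b$ and, say, $Q_3=(t_{14},t_{23})$ is singular, then $t_{14}=t_{23}=0$, hence $t_{13}=\pm t_{23}=0$ and $t_{24}=\pm t_{14}=0$, so $Q_2$ is singular too. Thus a coincidence yields \emph{either} two nonsingular points with $Q_i=\pm Q_j+kR$ \emph{or} two singular points. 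Conversely, if two points are singular then two products among $\{a,b,c,d\}$ equal $-1$ and share a common factor (e.g.\ $ab=ac=-1$ forces $b=c$), again a repeated element. Combining the two directions gives the stated ``if and only if''. The main obstacle is not conceptual but organisational: keeping the signs and the pairing of each coincidence with the correct $\{Q_i,Q_j\}$ consistent across all six cases, and verifying that the singular locus at $t=1$ lands precisely on the ``at least two singular points'' clause rather than producing spurious or missing degeneracies.
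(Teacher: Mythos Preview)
Your proof is correct and follows the same approach as the paper: the paper records (just before the lemma) that $-Q$ and $Q+R$ correspond to $(-x,y)$ and $(y,-x)$ on $\mathcal{D}_t$, and then simply says ``hence the following lemma follows,'' relying on the earlier remark that distinctness of $a,b,c,d$ is equivalent to no two $Q_i$ being related by sign changes and coordinate swaps. You have unpacked precisely this argument --- identifying the order-$8$ dihedral symmetry, matching each coincidence among $a,b,c,d$ to the correct pair $\{Q_i,Q_j\}$, and explicitly handling the singular case $t=1$ --- so your proof is a fully written-out version of what the paper leaves implicit.
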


A short calculation shows that for the point $(U,V)\in \tilde{E_t}(\F)$ corresponding to $(x,y)\in \mathcal{D}_t$ we have
\begin{equation*}
x^2-1=\left(\frac{V}{2U}\right)^2-1=T\left(\frac{U - t}{2U}\right)^2=:f((U,V)).
\end{equation*}
Since
\[
\begin{array}{ll}
a^2 &=\displaystyle \frac{f(Q_1)f(Q_2)f(Q_3)}{t} \equiv x(Q_1)x(Q_2)x(Q_3)t\equiv x(Q_1)x(Q_2)x(Q_3)x(R) \pmod{\F^{\times 2}}
\end{array}
\]
 for the rationality of $a$ it is enough to prove that $x(Q_1)x(Q_2)x(Q_3)x(R)$ is a square in $\F$.

If $t=1$ and $Q=(U,V)\in \tilde{E_1}(\F)$ is a nonsingular point, then $x(Q)=\frac{V^2}{(U+1)^2}$ is always a square in $\F$, hence the triple $(Q_1,Q_2,Q_3)\in \tilde{E_1}(\F)^3$ of distinct points corresponds to the quadruple whose elements are $\F$-rational if and only if $-1$ is a square in $\F$ (since $-1$ is $x$-coordinate of the singular point) or if all the points $Q_i$ are nonsingular.

\begin{lemma}\label{lem:rat}
If $t\ne 0,1$ then the triple $(Q_1,Q_2,Q_3)\in \E^3$ corresponds to the quadruple whose elements are $\F$-rational, if and only if
$$ Q_1+Q_2+Q_3+R \in \{\mathcal{O}, 2R\} \textrm{ or } x(Q_1+Q_2+Q_3+R) \textrm{ is a square}.$$
\end{lemma}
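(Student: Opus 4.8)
The plan is to read the asserted criterion as the statement that the Weierstrass $x$-coordinate on $E_t$, after being assigned the correct values at the two points $\mathcal{O}$ and $2R$, defines a homomorphism from $E_t(\F)$ into $\F^\times/\F^{\times 2}$. Here and below $x(Q)$ denotes the $U$-coordinate of a point $Q\in E_t$, as in the computation just before the lemma, which already reduces everything to showing that the quadruple is $\F$-rational exactly when $x(Q_1)x(Q_2)x(Q_3)x(R)$ is a square. Since $E_t:V^2=U\bigl(U^2-2(t-2)U+t^2\bigr)$ carries the rational $2$-torsion point $(0,0)$ with $b=t^2$, I would introduce the classical $2$-descent map $\alpha\colon E_t(\F)\to\F^\times/\F^{\times 2}$ given by $\alpha(\mathcal{O})=1$, $\alpha\bigl((0,0)\bigr)=t^2$, and $\alpha(Q)=x(Q)$ otherwise, and then deduce the lemma from the multiplicativity of $\alpha$.

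First I would record the two elementary facts about $R$ that feed into $\alpha$. Doubling $R=(t,2t)$: the tangent slope is $\lambda=(3t^2+2(-2t+4)t+t^2)/(4t)=2$, whence $x(2R)=\lambda^2+2(t-2)-2t=0$ and so $2R=(0,0)$; this explains why the exceptional set in the lemma is exactly $\{\mathcal{O},2R\}$. Moreover $R\ne\mathcal{O},(0,0)$ for $t\ne 0$, so $\alpha(R)=x(R)=t$, and consequently $\alpha(2R)=\alpha(R)^2\equiv 1$, in agreement with $b=t^2$ being a square. Each $Q_i\in\E$ avoids $\mathcal{O}$ and $2R=(0,0)$, so $\alpha(Q_i)=x(Q_i)$; writing $S:=Q_1+Q_2+Q_3+R$, multiplicativity of $\alpha$ then gives $\alpha(S)=\alpha(Q_1)\alpha(Q_2)\alpha(Q_3)\alpha(R)=x(Q_1)x(Q_2)x(Q_3)x(R)$ in $\F^\times/\F^{\times 2}$, which is precisely the class of $a^2$.

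The quadruple is $\F$-rational if and only if this class is trivial, i.e. $\alpha(S)=1$. Reading this off the definition of $\alpha$ leaves exactly three possibilities: $S=\mathcal{O}$; or $S=2R=(0,0)$, where $\alpha(S)=t^2\equiv 1$; or $S\notin\{\mathcal{O},2R\}$ and $\alpha(S)=x(S)$ is a square. These are the two alternatives in the statement, so the lemma follows once $\alpha$ is known to be a homomorphism.

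The one genuinely substantive step is therefore the multiplicativity of $\alpha$, which is the standard descent-via-$2$-isogeny fact for a curve with a rational $2$-torsion point. Its core is a Vieta computation: if $P_1+P_2+P_3=\mathcal{O}$ along a chord $V=\lambda U+\nu$, then $x(P_1),x(P_2),x(P_3)$ are the roots of $U^3+(a-\lambda^2)U^2+(b-2\lambda\nu)U-\nu^2=0$, so their product equals $\nu^2$, a square. The hard part is not this identity but the bookkeeping of the degenerate configurations --- vertical chords, tangents, and chords through $(0,0)$ (where $\nu=0$ forces the normalising value $b=t^2$ to appear, and one checks $x(P_2)x(P_3)=b$) --- each of which must be matched against the special values $\alpha(\mathcal{O})=1$ and $\alpha((0,0))=t^2$. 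Alternatively, I could simply cite the standard reference for this descent map and quote that $\alpha$ is a homomorphism with $\alpha((0,0))=b$, bypassing the case analysis entirely.
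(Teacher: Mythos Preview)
Your proof is correct and follows essentially the same approach as the paper: both arguments reduce the rationality condition to the triviality of $x(Q_1)x(Q_2)x(Q_3)x(R)$ in $\F^\times/\F^{\times 2}$ and then invoke the multiplicativity of the $2$-isogeny descent map $Q\mapsto x(Q)$ (with the normalisations at $\mathcal{O}$ and $(0,0)$) attached to the rational $2$-torsion point $(0,0)=2R$. The only difference is presentational --- the paper simply cites \cite{MS} for the homomorphism property of $\delta_\phi$, whereas you additionally verify $2R=(0,0)$ explicitly and sketch the Vieta argument underlying multiplicativity.
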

\begin{proof}
Since $(0,0)\in E_t(\F)$ is a point of order two, there is an elliptic curve $E_t^{'}$ defined over $\F$ and 2-isogeny $\phi':E_t \rightarrow E_t^{'}$ such that $\ker{\phi^{'}}=\langle (0,0) \rangle$. Denote by $\phi:E_t^{'} \rightarrow E_t$ the dual isogeny of $\phi^{'}$. Then there is a descent homomorphism $\delta_\phi: E_t(\F)/\phi(E_t^{'}(\F) \rightarrow H^1(\F,E_t^{'}[\phi])\cong\F^\times/\F^{\times 2}$ (see Section 2 of \cite{MS}), which maps point $(U,V)\mapsto U$ if $U\ne 0$, and points $2R=(0,0),\mathcal{O} \mapsto 1$.

It follows that
$$x(Q_1)x(Q_2)x(Q_3)x(R) \equiv \delta_\phi(Q_1+Q_2+Q_3+R) \pmod{\F^{\times 2}},$$
hence the claim follows.
\end{proof}

We call a triple $(Q_1, Q_2, Q_3) \in \E^3$ \textit{admissible} if it corresponds to a Diophantine quadruple. It follows from Lemma \ref{lem:dis} and Lemma \ref{lem:rat} that this holds if and only if the following is true

\begin{itemize}
	\item[a)] $t \ne 0,$
	\item[b)] there are no two nonsingular points $Q_i$ and $Q_j$ with $i\ne j$ such that $Q_i=\pm Q_j+kR$ for some $k\in \{0, 1, 2, 3 \}$,
	\item[c)] if $t\ne 0,1$ then $x(Q_1+Q_2+Q_3+R)$ is a square in $\F$ or if $t=1$ then all $Q_i$'s are nonsingular or $-1$ is a square in $\F$.
\end{itemize}

\section{Counting admissible triples} \label{sec:counting}

The main idea of this paper is to count the number $N^{(4)}(p)$ of Diophantine quadruples over $\F$, by counting the admissible triples $(Q_1, Q_2, Q_3)$.

Since one Diophantine quadruple $\{a,b,c,d\}$ corresponds to several admissible triples, we count each admissible triple with weight $w=w(Q_1,Q_2,Q_3)$ where $48/w$ (or $24/w$ if $\{-a,-b,-c,-d\}=\{a,b,c,d\}$) is equal to the number of admissible triples that correspond to the same Diophantine quadruple(s) as $(Q_1, Q_2, Q_3)$. More precisely, if some $Q_i$ or $Q_i \pm R$ has order $2$ then the weight is  $w=\frac{1}{2^4}$ (because one $t_{ij}$ will be equal to $0$), otherwise it is $w=\frac{1}{2^5}$. Note that if there are two $Q_i$ and $Q_j$ such that $2Q_i=\pm R$ and $2Q_j=\pm R$, then the corresponding Diophantine quadruple has the property that $\{-a,-b,-c,-d\}=\{a,b,c,d\}$ and we count such a triple with weight $w=\frac{1}{2^5}$ (unless the third element $Q_k$ or $Q_k\pm R$ has order $2$, in which case the weight is $w=\frac{1}{2^4}$). For $t \in \F\backslash \{0,1\}$, denote by
\[
W(t) = \frac{1}{24}\sum_{(Q_1, Q_2, Q_3)} w(Q_1, Q_2, Q_3),
\]
where the sum is over all admissible triples $(Q_1, Q_2, Q_3) \in E_t(\F)^3$. Thus $W(t)$ is equal to the number of Diophantine quadruples $\{a,b,c,d\}$ with $abcd=t$.
Also for the correct count, Diophantine quadruples corresponding to the singular fiber $\mathcal{D}_1$ will be counted separately, we denote their number by $W(1)$. For $t \in \F, t\ne 0, 1$, denote by $P(t) = \# E_t(\F)$.

For every $Q \in \E$, denote by $[Q]$ the set $\{ Q + kR, -Q+kR: k \in \{0, 1, 2, 3\}\}$. We call such a set the class of $Q$. Note that $\#[Q]=8$, unless $[Q]$ contains a point of order $2$ (different than $2R=(0,0)$) or a point $Q'$ such that $2Q'=\pm R$, in which case $\#[Q]=4$.

\begin{proposition}\label{prop:square} Let $t \ne 0,1$  in $\F$ be such that $E_t(\F)[2]\cong \Z/2\Z\times \Z/2\Z$.
\begin{itemize}
	\item [a)] Let $T\in E_t(\F)$ be a point of order two, $T \ne 2R$. Then
	$x(T)$ is a square if and only if $p \equiv 1 \pmod{4}$.
	\item [b)] Let $Q \in E_t(\F)$ satisfy $2Q=\pm R$, and let $P\in E_t(\overline{\F})$ be such that $2P=Q$. Then
	$x(Q)$ is a square if and only if the subgroup $\langle P \rangle\le E_t(\overline{\F})$ generated by $P$ is $\F$-rational.
  \item[c)] Let $T \in E_t(\F)$ satisfy $2T = \mathcal{O}$ ($T \ne 2R$), and let $P\in E_t(\overline{\F})$ be such that $2P=T$. Then $x(T)$ is a square if and only if $P^\sigma - P \in \{\mathcal{O}, 2R\}$, for all $\sigma \in \textrm{Gal}(\overline{\F}/\F)$.
\end{itemize}
\end{proposition}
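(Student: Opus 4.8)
The common engine for all three parts is the descent map $\delta_\phi$ constructed in the proof of Lemma~\ref{lem:rat}. Since $\delta_\phi$ has kernel $\phi(E_t'(\F))$ and sends a point $(U,V)$ with $U \ne 0$ to $U \bmod \F^{\times 2}$, for any $M \in E_t(\F)$ with $M \ne 2R$ (so that $x(M) \ne 0$) we have
\[
x(M) \text{ is a square} \iff \delta_\phi(M) = 1 \iff M \in \phi(E_t'(\F)).
\]
The plan is therefore to translate each assertion "$x(\cdot)$ is a square" into membership in the image of the dual isogeny $\phi$, and then to detect that membership using a half-point.

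The link between a half-point and the image of $\phi$ is the following observation. Suppose $M \in E_t(\F)$ and $P' \in E_t(\overline{\F})$ satisfy $2P' = M$. Because $\phi \circ \phi' = [2]$, the point $\phi'(P')$ is a $\phi$-preimage of $M$, and since $\phi'$ is defined over $\F$ we have $\phi'(P')^\sigma = \phi'(P'^\sigma)$; hence $\phi'(P')$ is $\F$-rational iff $\phi'(P'^\sigma - P') = \mathcal{O}$ for all $\sigma$, i.e. iff $P'^\sigma - P' \in \ker\phi' = \langle 2R\rangle = \{\mathcal{O}, 2R\}$ for all $\sigma \in \textrm{Gal}(\overline{\F}/\F)$. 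I would then argue that $M \in \phi(E_t'(\F))$ is equivalent to $\phi'(P')$ being rational: the backward direction is immediate from $\phi(\phi'(P')) = M$, and for the forward direction one uses that $\ker\phi = \{\mathcal{O}, W\}$ has order two, so its nonzero point $W$ is automatically $\F$-rational; thus if one $\phi$-preimage of $M$ is rational both preimages are, and $\phi'(P')$ is one of them. Combining this with the displayed equivalence gives, for $M \ne 2R$ with $2P' = M$, that $x(M)$ is a square iff $P'^\sigma - P' \in \{\mathcal{O}, 2R\}$ for all $\sigma$. Taking $M = T$ and $P' = P$ (note $T \ne 2R$) proves part c) directly.

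For part b) I apply the same equivalence with $M = Q$ and $P' = P$, where $2Q = \pm R$. Since $R$ has order $4$, the point $Q$ has order $8$ and $P$ has order $16$, with $8P = 4Q = 2R$; hence $\langle P\rangle \cong \Z/16\Z$ and $\langle P\rangle \cap E_t[2] = \{\mathcal{O}, 2R\}$. Because $2P = Q$ is rational, $P^\sigma - P \in E_t[2]$ for every $\sigma$, and $\langle P\rangle$ is $\F$-rational iff $P^\sigma \in \langle P\rangle$, i.e. iff $P^\sigma - P \in \langle P\rangle \cap E_t[2] = \{\mathcal{O}, 2R\}$, for all $\sigma$. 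This is precisely the condition above, so $x(Q)$ is a square iff $\langle P\rangle$ is $\F$-rational.

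For part a) the cleanest route is a direct computation of the $2$-torsion. Writing $E_t: V^2 = U(U^2 - 2(t-2)U + t^2)$, the nonzero $2$-torsion points have $U = (t-2) \pm 2\sqrt{1-t}$, and the hypothesis $E_t(\F)[2] \cong \Z/2\Z \times \Z/2\Z$ forces $1-t$ to be a square, say $1-t = s^2$. Substituting $t = 1 - s^2$ gives the two $x$-coordinates $-(s-1)^2$ and $-(s+1)^2$, both nonzero since $t \ne 0$; each is $-1$ times a square, so it is a square iff $-1$ is, i.e. iff $p \equiv 1 \pmod 4$. The only genuinely delicate point in the whole argument is the forward direction of the half-point observation, namely ensuring that $\phi'(P')$ itself, rather than merely some $\phi$-preimage of $M$, is rational; this is exactly where the order-two kernel $\ker\phi$ does the work, since its nonzero element is Galois-fixed. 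Everything else is the $2$-torsion calculation or elementary bookkeeping in the cyclic group $\langle P\rangle$.
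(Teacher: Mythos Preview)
Your argument is correct. Part a) is exactly the paper's computation, and your proofs of b) and c) arrive at the same conclusion as the paper but by a somewhat different and more self-contained route.

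The paper invokes the full $2$-descent bilinear pairing $b:E_t(\F)/2E_t(\F)\times E_t[2]\to \Ff/{\Ff}^2$ from Silverman (Theorem X.1.1), together with the Weil pairing identity $e_2(P^\sigma-P,2R)=\delta_{\F}(b(Q,2R))(\sigma)$ and the explicit formula $b(Q,2R)\equiv x(Q)$; one then reads off that $x(Q)$ is a square iff $e_2(P^\sigma-P,2R)=1$ for all $\sigma$, and uses nondegeneracy of $e_2$ to identify this with $P^\sigma-P\in\{\mathcal{O},2R\}$. You instead stay entirely within the $2$-isogeny descent already set up in Lemma~\ref{lem:rat}: the factorisation $[2]=\phi\circ\phi'$ exhibits $\phi'(P)$ as a $\phi$-preimage of $M$, and the fact that $\ker\phi$ has order two (hence its nonzero point is automatically $\F$-rational) lets you conclude that $M\in\phi(E_t'(\F))$ iff $\phi'(P)$ is rational iff $P^\sigma-P\in\ker\phi'=\{\mathcal{O},2R\}$. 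This avoids both the Weil pairing and the external reference, at the cost of the small extra observation about $\ker\phi$ that you flag as the delicate point. The bookkeeping in part b), identifying $\langle P\rangle\cap E_t[2]=\{\mathcal{O},2R\}$ via $8P=4Q=2R$, is clean and correct.
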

\begin{proof}
\begin{itemize}[leftmargin=0.5 cm]
	\item [a)] The $x$-coordinates of the points of order two satisfy $x(x^2+(4-2t)x+t^2)=0$. In particular, $x(T)=t-2\pm2\sqrt{1-t}=-(\pm\sqrt{1-t}-1)^2$ is a square if and only if $\g{-1}=1$ (since $\g{1-t}=1$). Hence the claim follows.
	\item[b)] It follows from the explicit two-descent theory (see Theorem 1.1. in Chapter X of \cite{Sil}) that there is a bilinear pairing $$b:E_t(\F)/2E_t(\F) \times E_t[2] \rightarrow \Ff/{\Ff}^2$$ satisfying
	$$e_2(P^\sigma-P,2R)=\delta_{\F}(b(Q,2R))(\sigma) \textrm{ for every }\sigma \in \mathrm{Gal}(\overline{\F}/\F),$$
	where $e_2$ is the Weil pairing, and $\delta_{\F}$ is the connecting homomorphism for the Kummer sequence associated to the group variety $\mathbb{G}_m/\F$. Moreover, $b(Q,2R)\equiv x(Q) \pmod{{\Ff}^2}$. In particular, $x(Q)$ is square if and only if $e_2(P^\sigma-P, 2R)=1$ for all $\sigma$, or equivalently if and only if $P^\sigma-P\in \{\mathcal{O}, 2R \}$ for all $\sigma$ (since $e_2(2R,2R)=e_2(\mathcal{O},2R)=1$ and $e_2(T,2R)=e_2(T+2R,2R)=-1$ where $T\ne 2R$ is a point of order two). The claim follows since $4P=\pm R$.
	\item[c)] Same as in b), $x(T)$ is square if and only if $e_2(P^\sigma-P,2R)=1$ for all $\sigma$. Hence, the claim follows.
\end{itemize}
\end{proof}
\begin{remark}
Note that half of the points in $E_t(\F)$ will have a $x$-coordinate equal to a square.
\end{remark}

\noindent For the rest of the section fix $t \ne 0,1$ in $\F$. We calculate $W(t)$ in the following three cases.
\subsection{$R$ is not divisible by $2$ in $\E$}
\begin{itemize}
\item[a)]
In the case where $x(R)\ne \square$, we can count triples $(Q_1,Q_2,Q_3)$ by first choosing three different classes $([Q_1], [Q_2], [Q_3])$, and then choosing all possible elements from these classes. Half of these triples will be admissible since for every $P \in \E$ precisely one of $x(P)$ and $x(P+R)$ is a square since by two-isogeny descent homomorphism we have that $x(P+R)\equiv x(P)x(R) \pmod{{\Ff}^2}$(see the proof of Lemma \ref{lem:rat}). We consider two cases:
\begin{itemize}[leftmargin=0.8 cm]
	\item [i)] $E_t(\F)[2]\cong \Z/2\Z$ ($2R$ is the only point of order two and $x(R)\ne \square$)\\
	All the classes in $\E$ (note that we don't consider class $[R]$ which has order $4$) have eight elements and weight $w=\frac{1}{2^5}$. Denote by $b$ the total number of classes. Then $b=\frac{P(t)-4}{8}$ and
	\begin{equation*}
	24 W(t)=w\cdot b(b-1)(b-2)2^8=\ds\frac{(P(t)-20)(P(t)-12)(P(t)-4)}{64}.
	\end{equation*}
	\item[ii)] $E_t(\F)[2]\cong \Z/2\Z\times \Z/2\Z$ and $x(R)\ne\square$\\ Let $T\in E_t(\F)$ be a point of order two different than $2R$.
	The class $[T]$ contains four elements (and triples containing its elements have weight $w=\frac{1}{2^4}$), while  other $b=\frac{P(t)-8}{8}$ classes different from $[T]$ and $[R]$ contain eight elements. Hence,
\begin{equation*}	
24 W(t)= \frac{1}{2^5} b(b-1)(b-2)2^8+\frac{1}{2^4}3 \cdot b(b-1)2^{7}
    = \ds \frac{P(t)(P(t)-8)(P(t)-16)}{64}.
\end{equation*}
\end{itemize}
\item[b)]
In the case where $x(R)=\square$, we consider two cases:
\begin{itemize}[leftmargin = 0.8cm]
	\item [i)] $E_t(\F)[2]\cong \Z/2\Z\times \Z/2\Z$ and  $p\equiv 1 \pmod{4}$\\
	Proposition \ref{prop:square} implies that $x(T)=\square$, hence there are $b_1=\frac{P(t)-16}{16}$ eight-element classes $[Q_i]$ for which $x(Q_i)$ is a square, and $b_2=\frac{P(t)}{16}$ eight-element classes $[Q_i]$ for which $x(Q_i)$ is not a square. Hence,
\begin{eqnarray*}
24 W(t)&=&\frac{1}{2^5}b_1(b_1-1)(b_1-2)2^9+\frac{3}{2^5}b_1 b_2 (b_2-1) 2^9+\frac{3}{2^4}(b_1(b_1-1)+b_2(b_2-1))2^8\\
&=& \ds \frac{P(t)(P(t)-8)(P(t)-16)}{64}.
\end{eqnarray*}
Note that in this case $W(t)$ is equal to the $W(t)$ from the $b)$.
	\item [ii)] $E_t[2](\F)\cong \Z/2\Z\times \Z/2\Z$ and $p\equiv 3 \pmod{4}$\\
	Proposition \ref{prop:square} implies that $x(T)\ne \square$, hence there are $b=\frac{P(t)-8}{16}$ classes $[Q_i]$ for which $x(Q_i)$ is a square, and $b$ classes $[Q_i]$ for which $x(Q_i)$ is not a square. Hence
\begin{eqnarray*}
24W(t)&=& \frac{1}{2^5}(b(b-1)(b-2)2^9+3b(b-1)b \cdot 2^9)+\frac{6}{2^4}(b\cdot b)2^8\\
&=& \frac{1}{64} (P(t) - 8) (P(t)^2 - 16P(t) + 192).
\end{eqnarray*}
\end{itemize}
\end{itemize}

\subsection{$2Q=R$ and $x(Q)\ne \square$}

Since $2Q=R$ for some $Q\in E_t(\F)$, we have that $x(P) \equiv x(P+R) \pmod{{\Ff}^{2}}$ for all $P \in E$, so for any class $[Q_1]$ in $\E$ the $x$-coordinates of the points in $[Q_1]$ are either all squares or non-squares. We consider two cases:

\begin{itemize}[leftmargin=0.8cm]
	\item [a)] $E_t(\F)[2]\cong \Z/2\Z$ ($2R$ is the only point of order two)\\
	The class $[Q]$ contains four points, while the other $b=\frac{P(l)-8}{8}$ classes contain eight points. All triples have weight $w=\frac{1}{2^5}$. There are precisely $\frac{b}{2}$ classes $[Q_1]$ for which $x(Q_1)$ is equal to a square (since $x(Q)$ is not a square, and half of the points in $E_t(\F)$ have a $x$-coordinate which is a square). Hence
\begin{eqnarray*}
24 W(t)&=&\frac{1}{2^5}\left(\frac{b}{2}\left(\frac{b}{2}-1\right)\left(\frac{b}{2}-2\right)2^9+3\left(\frac{b}{2}\right)^2\left(\frac{b}{2}-1\right)2^9+3!\left(\frac{b}{2}\right)^2 2^8 \right)\\
&=& \ds \frac{(P(t)-8)(P(t)^2-28P(t)+288)}{64}.
\end{eqnarray*}
	
	\item [b)] $E_t(\F)[2]\cong \Z/2\Z\times \Z/2\Z$ ($T\in E_t(\F)$ is a point of order two different than $2R$)\\
	There are three classes with four elements: $[Q]$, $[T]$, and $[T+Q]$.
	\begin{enumerate}
		\item [i)] $p\equiv 1 \pmod{4}$\\
		In this case Proposition \ref{prop:square} implies that $x(T)$ is a square, and $x(Q+T)$ is not a square. There are $b=\frac{P(t)-16}{8}$ classes with eight elements, half of which have $x$-coordinate equal to a square. We calculate
\begin{eqnarray*}
24W(t)&=& \frac{1}{2^5}\left(\frac{b}{2}\left(\frac{b}{2}-1\right)\left(\frac{b}{2}-2\right)2^9+3\left(\frac{b}{2}\right)^2\left(\frac{b}{2}-1\right)2^9 \right)\\&+&\frac{2\cdot 3}{2^4}\left(\frac{b}{2}\right)\left(\frac{b}{2}-1\right) 2^8
+ \frac{2}{2^5}\left( 3!\left(\frac{b}{2}\right)^2\right)2^8+\frac{3!}{2^4}\frac{b}{2}2^7\\&+&\frac{3!}{2^4}\frac{b}{2}2^7+\frac{3!}{2^5}\frac{b}{2}2^7
+ \frac{3!}{2^4}2^6\\
&=&\frac{P(t)(P(t)^2-24P(t)+224)}{64}.
\end{eqnarray*}		
		\item [ii)] $p \equiv 3 \pmod{4}$\\
		In this case Proposition \ref{prop:square} implies that $x(T)$ is not a square, and $x(Q+T)$ is a square. There are $b=\frac{P(t)-16}{8}$ classes with eight elements, half of which have $x$-coordinate equal to a square. We calculate $W(t)$ as in i).
\begin{eqnarray*}
24W(t)&=& \frac{1}{2^5}\left(\frac{b}{2}\left(\frac{b}{2}-1\right)\left(\frac{b}{2}-2\right)2^9+3\left(\frac{b}{2}\right)^2\left(\frac{b}{2}-1\right)2^9 \right)\\&+&\frac{3!}{2^4}\left(\frac{b}{2}\right)^2 2^8
+ \frac{1}{2^5}\left( 3!\left(\frac{b}{2}\right)^2 + 3 \cdot 2\cdot\frac{b}{2}\left(\frac{b}{2}-1\right)\right)2^8+\frac{3!}{2^4}\frac{b}{2}2^7\\&+&\frac{3!}{2^4}\frac{b}{2}2^7+\frac{3!}{2^5}\frac{b}{2}2^7
+ \frac{3!}{2^4}2^6\\
&=&\frac{P(t)^3-24P(t)^2+416 P(t)-3072}{64}.
\end{eqnarray*}				
	\end{enumerate}
\end{itemize}

\subsection{$2Q=R$ and $x(Q) = \square$}
We consider two cases.
\begin{itemize}[leftmargin=0.8cm]
	\item [a)]  $E_t(\F)\cong \Z/2\Z$ ($2R$ is the only point of order two)\\
	The class $[Q]$ contains four points, while the other $b=\frac{P(t)-8}{8}$ classes contain eight points. All triples have weight $w=\frac{1}{2^5}$. There are $b_1=\frac{b-1}{2}$ classes $[Q_1]$ with $x(Q_1)=\square$, and $b_2=\frac{b+1}{2}$ classes $[Q_1]$ with $x(Q_1) \ne \square$. We have
	\begin{eqnarray*}
	24W(t)&=& \frac{1}{2^5}\left( b_1(b_1-1)(b_1-2)+3b_1 b_2(b_2-1) \right)2^9+\frac{1}{2^5}\left(3 b_1(b_1-1)+3 b_2(b_2-1) \right)2^8\\
	&=&\ds \frac{(P(t)-16)(P(t)^2-20P(t)+192)}{64}.
	\end{eqnarray*}
	\item [b)]  $E_t(\F)\cong \Z/2\Z\times \Z/2\Z$ ($T\in E_t(\F)$ is a point of order two different than $2R$)\\
There are three classes with four elements: $[Q]$, $[T]$, and $[T+Q]$, and $b=\frac{P(t)-16}{8}$ classes with eight elements.

\begin{enumerate}
	\item [i)] $p \equiv 1 \pmod{4}$\\
	Proposition \ref{prop:square} implies that $x(T)$ and $x(T+Q)$ are both  squares. Then $b_1=\frac{b-2}{2}$  and $b_2=\frac{b+2}{2}$.
	\begin{eqnarray*}
24 W(t) &=& \frac{1}{2^5}\left( b_1(b_1-1)(b_1-2)+3b_1 b_2(b_2-1) \right)2^9+\left(2+2\right)\frac{1}{2^5}\left(3 b_1(b_1-1)+3 b_2(b_2-1) \right)2^8\\
&+&\frac{2}{2^4}3! b_1 2^7+\frac{1}{2^5}3!b_1 2^7 + \frac{1}{2^4}3!2^6\\
&=& \frac{(P(t)-16)(P(t)^2-8P(t)+96)}{64}.
\end{eqnarray*}
	\item [ii)] $p \equiv 3 \pmod{4}$\\
	Proposition \ref{prop:square} implies that $x(T)$ and $x(T+Q)$ are not squares. Then $b_1=b_2=\frac{b}{2}$, and we calculate
	\begin{eqnarray*}
24 W(t) &=& \frac{1}{2^5}\left( b_1(b_1-1)(b_1-2)+3b_1 b_2(b_2-1) \right)2^9+\frac{1}{2^5}\left(3 b_1(b_1-1)+3 b_2(b_2-1) \right)2^8\\
&+& (1+2)\frac{1}{2^5}\left( 3!b_1 b_2\right)2^8+\frac{1}{2^4}3! b_1 2^7+\frac{1}{2^4}3! b_2 2^7+\frac{1}{2^5}3! b_1 2^7+\frac{1}{2^4}3!2^6\\
&=& \frac{P(t)^3-24P(t)^2+416P(t)-3072}{64}.
	\end{eqnarray*}
\end{enumerate}
\end{itemize}
\subsection{Putting everything together}
For the fixed prime $p$ we define the following sets:
\begin{eqnarray*}
T_0&=&\left\{ t \in \F^\times\setminus\{1\}: E_t[2](\F) = \frac{\Z}{2\Z} \times  \frac{\Z}{2\Z} \textrm{ and } x(R)=\square\right\}\\
T_1&=&\left\{t \in \F^\times\setminus\{1\}:  E_t[2](\F) = \frac{\Z}{2\Z} \times  \frac{\Z}{2\Z}  \right\}\\
T_2&=&\left\{t \in \F^\times\setminus\{1\}: E_t[2](\F) = \frac{\Z}{2\Z}  \times  \frac{\Z}{2\Z}  \textrm{ and }2Q = R \textrm { for some } Q \in E_t(\F) \right\}\\
T_3&=&\left\{t \in \F^\times\setminus\{1\}: 2Q = R \textrm { for some } Q \in E_t(\F) \right\}\\
T_4&=&\left\{ t \in T_3: \langle S \rangle \textrm{ is } \F \textrm{-rational, where } 2S\in E_t(\F) \textrm{ and } 4S=R  \textrm{ for some }S \in E_t \right\}\\
T_5&=&\left\{t \in T_4:  E_t[2](\F) = \frac{\Z}{2\Z} \times  \frac{\Z}{2\Z}\right\}.
\end{eqnarray*}

\begin{remark}
 Note that it follows from Proposition \ref{prop:square} that
$$T_4 = \{ t \in \F^\times\setminus\{1\}: 2Q = R \textrm{ and } x(Q)=\square \textrm { for some } Q \in E_t(\F)\}.$$ Also, if $p \equiv 3 \pmod{4}$, then $T_2=T_5$.
\end{remark}

We have the following proposition.

\begin{proposition}\label{prop:main}
\begin{enumerate}
	\item [a)] If $p \equiv 1 \pmod{4}$, then
\begin{eqnarray*}	
24 \sum_{t\ne 0,1} W(t)&=&\sum_{t\ne 0,1}\left( \frac{1}{64}P(t)^3 -\frac{9}{16} P(t)^2 + \frac{23}{4} P(t) -15\right)\\  &+&  \sum_{t \in T_1}\left( \frac{3}{16}P(t)^2-\frac{15}{4}P(t)+ 15\right)-\sum_{t \in T_2}\left(\frac{3}{4}P(t)-21 \right)\\ &+& \sum_{t \in T_3}\left(\frac{9}{4}P(t)-21\right)-12\sum_{t \in T_4}1-12\sum_{t \in T_5}1.
\end{eqnarray*}
	\item [b)] If $p \equiv 3 \pmod{4}$ then
	\begin{eqnarray*}	
24 \sum_{t\ne 0,1} W(t)&=&\sum_{t\ne 0,1}\left( \frac{1}{64}P(t)^3 -\frac{9}{16} P(t)^2 + \frac{23}{4} P(t) -15\right)\\  &+&  \sum_{t \in T_1}\left( \frac{3}{16}P(t)^2-\frac{15}{4}P(t)+ 15\right)-\sum_{t \in T_2}\left(\frac{3}{4}P(t)+3 \right)\\ &+& \sum_{t \in T_3}\left(\frac{9}{4}P(t)-21\right)+\sum_{t\in T_0}\left(3P(t)-24 \right)-12\sum_{t \in T_4}1+12\sum_{t \in T_5}1.
\end{eqnarray*}

\end{enumerate}
\end{proposition}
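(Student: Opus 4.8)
The plan is to treat Proposition \ref{prop:main} as a bookkeeping identity that assembles the ten case-by-case evaluations of $24W(t)$ carried out in the three preceding subsections; there is no new geometric input beyond Proposition \ref{prop:square} and those computations. The first step is to verify that the subsections genuinely partition $\{t\in\F:t\neq 0,1\}$ into mutually exclusive, exhaustive cases. Each such $t$ lies in exactly one of the families (A) $R\notin 2E_t(\F)$, i.e. $t\notin T_3$; (B) $2Q=R$ with $x(Q)\neq\square$, i.e. $t\in T_3\setminus T_4$; (C) $2Q=R$ with $x(Q)=\square$, i.e. $t\in T_4$. Within each family the value of $24W(t)$ is pinned down by whether $t\in T_1$ (full $2$-torsion), by whether $x(R)=\square$, and by the class of $p$ modulo $4$. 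Two points need care. First, the sub-case ``$R\notin 2E_t(\F)$ and $x(R)=\square$'' must force full $2$-torsion, so that no case is omitted; writing $x(R)=t$ and recalling that full $2$-torsion over $\F$ is equivalent to $1-t$ being a square (the quadratic $U^2+(4-2t)U+t^2$ has discriminant $16(1-t)$), one sees that if $t=s^2$ is a square while $1-t=(1-s)(1+s)$ is not, then exactly one of $1\pm s$ is a square, which forces $R$ to be divisible by $2$ and hence $t\in T_3$, contradicting the hypothesis. Second, when $t\in T_1$ and $p\equiv 3\pmod 4$ the point $Q$ with $2Q=R$ is only determined up to a $2$-torsion translation that can flip whether $x(Q)$ is a square (because $x(T)\neq\square$ there, by Proposition \ref{prop:square}); the formulas of families (B) and (C) must therefore agree in this situation, which is exactly the identity $T_2=T_5$ recorded in the preceding Remark, so that $W(t)$ is unambiguous.

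With the partition in hand, I would introduce the generic cubic $G(P)=\frac{1}{64}P^3-\frac{9}{16}P^2+\frac{23}{4}P-15$, which is the value of $24W(t)$ in the most generic case $t\notin T_1\cup T_3$ with $x(R)\neq\square$ (it is the expansion of $(P-20)(P-12)(P-4)/64$). For every other case I would expand the formula for $24W(t)$ and subtract $G(P(t))$, obtaining a correction that is a polynomial of degree at most $2$ in $P(t)$. The heart of the matter is then the claim that there exist fixed polynomials $c_0,\dots,c_5$ in $P(t)$, depending only on $p\bmod 4$, such that for every $t\neq 0,1$
\[
24W(t)=G(P(t))+\sum_{i\,:\,t\in T_i}c_i(P(t)).
\]
Granting this, summing over all $t\neq 0,1$ and exchanging the order of summation turns $\sum_t G(P(t))$ into the leading sum of the Proposition, and each $\sum_{t\in T_i}c_i(P(t))$ into the corresponding $T_i$-contribution, giving the stated formulas.

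To determine the $c_i$ I would use the containments $T_2=T_1\cap T_3$, $T_2\subseteq T_0\subseteq T_1$, $T_4\subseteq T_3$ and $T_5=T_4\cap T_1\subseteq T_2$, which organize the $T_i$ into a small poset under inclusion; reading the correction off each atom of this poset and solving the resulting triangular system fixes the $c_i$ uniquely. Concretely, the atom $T_1\setminus(T_0\cup T_3)$ gives $c_1=\frac{3}{16}P^2-\frac{15}{4}P+15$; the atom $T_3\setminus(T_1\cup T_4)$ gives $c_3=\frac{9}{4}P-21$; the atom $T_4\setminus T_1$ then gives $c_4=-12$; and the full-$2$-torsion atoms fix $c_2$, $c_5$, and (only when $p\equiv 3$) the constant $c_0$. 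This is where the dichotomy in the statement originates: for $p\equiv 1$ the ``$x(R)=\square$'' and ``$x(R)\neq\square$'' formulas in family (A) coincide, so $c_0=0$ and $T_0$ does not appear, whereas for $p\equiv 3$ they differ by $3P-24$, producing the term $\sum_{t\in T_0}(3P(t)-24)$ together with the altered constant in the $T_2$-sum and the sign change in the $T_5$-sum. I would finish by verifying the displayed identity on each atom by direct polynomial algebra.

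I expect the main obstacle to be the bookkeeping rather than any single difficult idea. One must be certain that the case list is genuinely a partition (the vacuity and well-definedness points above), that each $t$ is charged exactly the corrections attached to the sets containing it, and that no atom is double-counted when the $T_i$ overlap. Once the correct attribution of corrections to $T_0,\dots,T_5$ is fixed, the polynomial identities are routine; the real work lies in checking the consistency of the whole scheme uniformly across roughly ten cases and the two residue classes of $p$.
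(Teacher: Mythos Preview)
Your proposal is correct and follows essentially the same approach as the paper. In fact the paper gives no explicit proof of Proposition~\ref{prop:main} at all; it is stated immediately after the case-by-case computations of Subsections~3.1--3.3 and is understood to be their summation, so your write-up is a careful, explicit version of what the paper leaves implicit. Your two consistency checks (that ``$R\notin 2E_t(\F)$ and $x(R)=\square$'' forces full $2$-torsion, and that the $p\equiv 3$ formulas in families (B) and (C) coincide on $T_2$) are exactly the points one must verify to be sure the case list is a genuine partition with a well-defined $W(t)$, and your inclusion-exclusion along the poset $T_5\subseteq T_2\subseteq T_0\subseteq T_1$, $T_5\subseteq T_4\subseteq T_3$, $T_2=T_1\cap T_3$ is the natural way to extract the coefficients $c_i$.
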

\subsection{Calculating $W(1)$}
The curve $\mathcal{D}_1: (x^2-1)(y^2-1)=1$ is birationally equivalent to the genus zero curve $E_1: S^2=T(T+1)^2$. Analysis similar (but easier) to the one in Section \ref{sec:cor} yields the following proposition.
\begin{proposition}\label{prop:W1}
\[
24 \cdot W(1)=
\begin{cases}
	\frac{(p-9)(p^2-18p+113)}{32}, \textrm{ if } p \equiv 1 \pmod{8}, \\
	\frac{(p-3)(p-11)(p-19)}{32}, \textrm{ if } p \equiv 3 \pmod{8}, \\
	\frac{(p-5)(p-9)(p-13)}{32}, \textrm{ if } p \equiv 5 \pmod{8},\\
	\frac{(p-7)(p-11)(p-15)}{32}, \textrm{ if } p \equiv 7 \pmod{8}. \\
\end{cases}
\]
\end{proposition}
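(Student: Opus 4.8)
The plan is to mimic the class-counting of Section~\ref{sec:counting}, but now on the smooth locus of the degenerate fibre $E_1$, exploiting three features that make the bookkeeping genuinely simpler. First I would pin down the group. The cubic $E_1: S^2 = T(T+1)^2$ has a single singular point at $(T,S)=(-1,0)$; writing $T=-1+u$ gives tangent cone $S^2=-u^2$, so the node is split exactly when $-1$ is a square in $\F$. Hence the group $E_1^{\mathrm{ns}}(\F)$ of nonsingular points is cyclic of order $p-1$ when $p\equiv 1\pmod 4$ and of order $p+1$ when $p\equiv 3\pmod 4$. In a cyclic group $2R$ is the unique point of order two, so we are always in the case $E_1[2](\F)\cong\Z/2\Z$, and the only further structural question is whether $R$ is divisible by two: since $R$ has order four, this happens precisely when $8\mid(p\mp 1)$, i.e. for $p\equiv 1,7\pmod 8$, and then there is a single four-element class $[Q_0]$ with $2Q_0=R$; for $p\equiv 3,5\pmod 8$ every class of nonsingular points has eight elements.

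Next I would record the two simplifications special to $t=1$. For a nonsingular point $Q=(U,V)$ one has $x(Q)=V^2/(U+1)^2$, always a square; consequently the square-condition in admissibility is vacuous and, unlike in Section~\ref{sec:counting}, no factor $\tfrac12$ enters the count (this is what doubles the leading coefficient). Second, the unique singular point corresponds to $(0,0)\in\mathcal{D}_1$, is fixed by the coordinate symmetries $(x,y)\mapsto(-x,y),(y,-x)$, so its orbit is a single point, and by condition~c) from the end of Section~\ref{sec:cor} it may occur in an admissible triple only when $-1$ is a square, i.e. for $p\equiv 1,5\pmod 8$; by Lemma~\ref{lem:dis} at most one $Q_i$ may be singular. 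Because the singular point has $t_{ij}=0$ in both of its coordinates, two sign choices become trivial, so each admissible triple representing a given quadruple has $4$ times fewer partners and is counted with weight $4\cdot 2^{-5}=2^{-3}$ rather than $2^{-5}$.

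With this in hand the count is purely combinatorial: admissible triples are exactly the ordered triples of nonsingular points lying in pairwise-distinct classes, together with those obtained by replacing one entry by the singular point, and by the above every such triple is admissible. Writing $b$ for the number of eight-element classes, I would sum four contributions, namely $2^{-5}b(b-1)(b-2)2^{9}=16\,b(b-1)(b-2)$ from three generic classes, $24\,b(b-1)$ from one $[Q_0]$ and two generic classes (present when $8\mid p\mp1$), $24\,b(b-1)$ from the singular point and two generic classes (present when $p\equiv1\pmod4$), and $24\,b$ from the singular point, $[Q_0]$ and one generic class. Selecting the terms present in each residue class and substituting $b=(p\mp1-4-4[\,8\mid p\mp1\,])/8$ then yields the four stated formulas for $24\,W(1)$.

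The main obstacle is the weight bookkeeping rather than any geometry: one must justify, residue class by residue class, the multiplicity $48/w$ (or $24/w$) of admissible triples representing a given quadruple. The delicate points are the enhanced weight $2^{-3}$ for triples meeting the singular point, which I would trace to its two vanishing coordinates $t_{ij}=0$ killing sign flips, and the verification that no symmetric quadruples $\{-a,-b,-c,-d\}=\{a,b,c,d\}$ occur here, since such a quadruple would force two distinct triple-entries with $2Q=\pm R$, impossible once the entries lie in distinct classes because all points with $2Q=\pm R$ lie in the single class $[Q_0]$. Checking that the four contributions above reproduce the claimed polynomials in all four residue classes is the real content of the argument.
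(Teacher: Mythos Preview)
Your approach is correct and is essentially the ``similar but easier'' analysis the paper alludes to without giving details. The group identification, the case split on $p\bmod 8$, the class sizes, the weight $2^{-3}$ for triples containing the singular point, and the four combinatorial contributions all check out, and substituting the stated values of $b$ reproduces the four formulas exactly.

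One point in your justification is not quite right, though it is harmless. You argue that no symmetric quadruple $\{-a,-b,-c,-d\}=\{a,b,c,d\}$ occurs because such a quadruple would force two entries with $2Q=\pm R$. That implication is the \emph{converse} of what the paper states, and it is false here: for $p\equiv 1\pmod 8$ the sets $\{a,-a,1/a,-1/a\}$ (with $1-a^2$ and $1-1/a^2$ squares) are symmetric Diophantine quadruples with $abcd=1$, and the corresponding triples are precisely of type ``singular $+[Q_0]+$ generic''. The reason your count is nonetheless correct is that the weight depends only on the number $z$ of vanishing $t_{ij}$'s: one checks that the map from (ordering of $\{a,b,c,d\}$, sign choices) to admissible triples is exactly $2$-to-$1$, so the number of triples is $24\cdot 2^{6-z}/2$ in the symmetric case and $24\cdot 2^{6-z}$ for the pair $\{q,-q\}$ in the non-symmetric case, giving $w=2^{z-5}$ uniformly. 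Thus the formula $W(1)=\tfrac{1}{24}\sum w$ is insensitive to symmetry, and your weights $2^{-5}$ and $2^{-3}$ already handle both situations. You should replace the ``no symmetric quadruples'' claim with this observation; the rest of the argument stands as written.
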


\section{Families of universal elliptic curves and $\ell$-adic representations}
\label{sec:universal}
\subsection{Modular curves and cusps}\label{sec:curves}
For $M, N \ge 1$, $M|N$, we denote by $Y(M,N)$ the quotient of the upper half plane by the congruence subgroup $\Gamma_1(N)\cap \Gamma^0(M)$. Here $\Gamma^0(M)=\left\{{\sm a b c d} \in \SL_2(\Z): {\sm a b c d} \equiv {\sm * 0 * * } \pmod{M} \right\}$. As a modular curve (irreducible, connected and defined over $\Q(\zeta_M)$) $Y(M,N)$ parametrizes elliptic curves $E$ together with the points $P$ and $Q$ of order $M$ and $N$, such that $P$ and $Q$ generate subgroup of order $MN$ and the Weil pairing $e_{N/M}$ between the points $P$ and $\frac{N}{M}Q$ is equal to the fixed primitive $M$-th root of unity, i.e. $e_{N/M}(P,\frac{N}{M}Q)=e^{2\pi i /M}$. We denote by $X(M,N)$ the compactification of $Y(M,N)$. For more information on modular curves $Y(M,N)$ see Section 2 in \cite{Kato}.

In Section \ref{sec:results}, we will need to know the number of $\F$-rational cusps on modular curves $X_1(8)_{\F}, X(2,4)_{\F}, X(2,8)_{\F}$ and $X(4,8)_{\F}$. Following \cite[Section 2]{BN}, we briefly explain how to calculate the field of definition of cusps on $X(M,N)$.

Let $r$ be a divisor of $N$. The cusps of $X(M,N)$ represented by the points $(a:b)\in \mathbb{P}^1(\Q)$, where $a,b$ are co-prime integers with $\gcd(b,N)=r$, all have the same field of definition, $\Q(\zeta_M) \le F_r \le \Q(\zeta_N)$. If we canonically identify $\textrm{Gal}(\Q(\zeta_N)/\Q)$ with $(\Z/N\Z)^\times$, then $F_r$ is the fixed field of the group $H_r$ acting on $\Q(\zeta_N)$, where
$H_r= H_r^0 :=\{ s \in (\Z/N\Z)^\times: s \equiv 1 \pmod{\textrm{lcm}(M, N/r)}\},$ if $\gcd(Mr,N) > 2$, and $H_r = H_r^0 \cdot \{ \pm 1\}$ otherwise.

It follows immediately that all four cusps of $X(2,4)$ are $\Q$-rational (i.e. $c(2,4)=4$), and that the number $c(2,8)$ of $\F$-rational cusps of $X(2,8)_{\F}$ is equal to
\[
c(2,8)=
\begin{cases}
	10, \textrm{ if } p \equiv 1 \pmod{8},\\
	4, \textrm{ if } p \equiv 3 \pmod{8},\\
	6, \textrm{ if } p \equiv 5 \pmod{8},\\
	8, \textrm{ if } p \equiv 7 \pmod{8}.\\
\end{cases}
\]

Moreover, the curve $X(4,8)_{\Q(i)}$ has eight cusps defined over $\Q(i)$ and eight cusps defined over $\Q(\zeta_8)$, hence the number of $\F$-rational cusps is equal to $$
c(4,8)=
\begin{cases}
	16, \textrm{ if } p \equiv 1 \pmod{8},\\
	8, \textrm{ if } p \equiv 5 \pmod{8}.\\
\end{cases}
$$
The number of $\F$-rational cusps on modular curve $X_1(8)_{\F}$ is equal to
\[
c(8)=
\begin{cases}
	6, \textrm{ if } p \equiv 1,7 \pmod{8},\\
	4, \textrm{ if } p \equiv 3,5 \pmod{8}.\\
\end{cases}
\]

The curves $X_1(4), X(2,4), X_1(8)$ and $X(2,8)$ have genus zero, while the curve $X(4,8)$ has genus one.

\subsection{Modular forms} \label{sec:forms}

Here we collect some facts about the spaces of modular forms related to the modular curves from the previous subsection. They can be checked using Sage \cite{SAGE} and LMFDB database \cite{LMFDB}.

\begin{proposition}\label{prop:forms} Denote by $T_p$ the $p$-th Hecke operator acting on the space of cusp forms $S_3(\Gamma_1(8) \cap \Gamma^0(2))$. We have
\begin{itemize}
	\item [a)] $\dim S_3(\Gamma_1(4))=\dim S_4(\Gamma_1(4))=0  \textrm{ and } S_5(\Gamma_1(4))= \mathbb{C} \cdot f_5(\tau)$,
	\item [b)] $\dim S_3(\Gamma_1(8) \cap \Gamma^0(2))=3 \textrm{ and } \textrm{Trace}(T_p)=2b(p)+c(p),$
	\item [c)] $S_3(\Gamma_1(8))=\mathbb{C}\cdot f_2(\tau),$
	\item [d)] $S_3(\Gamma_1(4) \cap \Gamma^0(2))=0$ and $S_4(\Gamma_1(4) \cap \Gamma^0(2))=\mathbb{C}\cdot f_4(\tau)$.
\end{itemize}
\end{proposition}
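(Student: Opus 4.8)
The plan is to reduce every assertion to two ingredients: the decomposition of a space of forms on $\Gamma_1(N)$ (or on a mixed group $\Gamma_1(N)\cap\Gamma^0(2)$) into pieces of fixed nebentypus, and the standard dimension formula for $S_k(\Gamma_0(N),\chi)$ together with the Atkin--Lehner--Li theory of oldforms and newforms. Throughout I write $\chi_{-4}=\left(\frac{-4}{\bullet}\right)$, $\chi_{-8}=\left(\frac{-2}{\bullet}\right)$ and $\chi_{8}=\left(\frac{2}{\bullet}\right)$ for the nontrivial Dirichlet characters of conductor $4$, $8$ and $8$. For the pure levels (parts a and c) I would first invoke $S_k(\Gamma_1(N))=\bigoplus_{\chi(-1)=(-1)^k}S_k(\Gamma_0(N),\chi)$. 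For $N=4$ the only character surviving in odd weight is $\chi_{-4}$ and in even weight the trivial one, so part a) becomes the three computations $\dim S_3(\Gamma_0(4),\chi_{-4})=\dim S_4(\Gamma_0(4))=0$ and $\dim S_5(\Gamma_0(4),\chi_{-4})=1$; the last space is then spanned by its unique normalized (CM) newform, identified with $f_5$. For $N=8$ the odd characters are $\chi_{-4}$ and $\chi_{-8}$, and part c) follows once one checks $\dim S_3(\Gamma_0(8),\chi_{-4})=0$ and $\dim S_3(\Gamma_0(8),\chi_{-8})=1$, the latter spanned by the CM form $f_2$. Each of these individual dimensions is read off from the Cohen--Oesterl\'e formula (equivalently from LMFDB).

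The substantive point is to handle the mixed groups in b) and d), and here I would conjugate by $W=\sm{1}{0}{0}{2}$. A direct matrix computation shows that $W(\Gamma_1(N)\cap\Gamma^0(2))W^{-1}$ consists of the $\sm{a}{b}{c}{d}\in\SL_2(\Z)$ with $a\equiv d\equiv 1\pmod N$ and $c\equiv 0\pmod{2N}$, i.e.\ equals $\Gamma_1(N)\cap\Gamma_0(2N)$, and the slash action of $W$ furnishes an isomorphism $S_k(\Gamma_1(N)\cap\Gamma^0(2))\cong S_k(\Gamma_1(N)\cap\Gamma_0(2N))$ that is equivariant for the Hecke operators $T_p$ with $p$ odd (these are away from the prime $2$ at which $W$ acts). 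The group $\Gamma_1(N)\cap\Gamma_0(2N)$ lies between $\Gamma_1(2N)$ and $\Gamma_0(2N)$, so its cusp forms decompose into those nebentypus characters mod $2N$ that are trivial on the order-two subgroup $\{d\equiv 1\pmod N\}$ of $(\Z/2N\Z)^\times$. For $N=4$ this selects the trivial character and $\chi_{-4}$, giving $S_3(\Gamma_1(4)\cap\Gamma^0(2))\cong S_3(\Gamma_0(8),\chi_{-4})=0$ and $S_4(\Gamma_1(4)\cap\Gamma^0(2))\cong S_4(\Gamma_0(8))=\mathbb{C}\cdot f_4$, where $f_4$ is the unique weight-$4$ level-$8$ form, necessarily new because the relevant lower-level spaces (in particular $S_4(\Gamma_0(4))$ from part a) vanish; this is part d). For $N=8$ the selected odd characters are $\chi_{-4}$ and $\chi_{-8}$, so
\[
S_3(\Gamma_1(8)\cap\Gamma^0(2))\cong S_3(\Gamma_0(16),\chi_{-4})\oplus S_3(\Gamma_0(16),\chi_{-8}).
\]

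It remains to analyze these two summands and extract the trace in b). Since $\dim S_3(\Gamma_0(4),\chi_{-4})=\dim S_3(\Gamma_0(8),\chi_{-4})=0$, the space $S_3(\Gamma_0(16),\chi_{-4})$ carries no oldforms, and the dimension formula gives $\dim S_3(\Gamma_0(16),\chi_{-4})=1$, so it equals $\mathbb{C}\cdot f_3$; thus $T_p$ contributes the eigenvalue $c(p)$ from this one-dimensional new piece. The dimension formula likewise gives $\dim S_3(\Gamma_0(16),\chi_{-8})=2$; since the level-$8$ newform $f_2$ already supplies a two-dimensional oldspace, namely its images under the two degeneracy maps $\tau\mapsto\tau$ and $\tau\mapsto 2\tau$ (the divisors of $16/8$), this exhausts the summand and leaves no new form of level $16$ with character $\chi_{-8}$. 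As $p$ is odd, hence coprime to the level, $T_p$ acts on each degeneracy copy with the newform eigenvalue $b(p)$, so its trace on this summand is $2b(p)$. Adding the contributions yields $\dim=1+2=3$ and $\mathrm{Trace}(T_p)=2b(p)+c(p)$, which is part b).

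I expect the main obstacle to be precisely this oldform/newform bookkeeping for the mixed-level groups: one must verify that the conjugation $W$ is compatible with the prime-to-level Hecke operators, that the nebentypus decomposition of the intermediate group $\Gamma_1(N)\cap\Gamma_0(2N)$ selects exactly the characters claimed, and that $f_2$ occurs with multiplicity $2$ while no genuine newform of level $16$ and character $\chi_{-8}$ intrudes. Each of these is a finite verification, which is why the statement is most efficiently confirmed with Sage and LMFDB; but the structural reduction above is what makes the numerology $2b(p)+c(p)$ transparent rather than accidental.
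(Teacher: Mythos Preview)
Your proposal is correct and substantially more detailed than what the paper actually does. The paper offers no proof at all for this proposition: it is presented as a list of facts with the preamble ``They can be checked using Sage \cite{SAGE} and LMFDB database \cite{LMFDB}.'' In other words, the authors treat the dimensions and the Hecke trace as computational input and do not attempt to explain the numerology $2b(p)+c(p)$.

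Your approach is therefore genuinely different in spirit. You give a structural reduction: the conjugation by $W=\sm{1}{0}{0}{2}$ identifying $S_k(\Gamma_1(N)\cap\Gamma^0(2))$ with $S_k(\Gamma_1(N)\cap\Gamma_0(2N))$, the nebentypus decomposition of the latter, and then Atkin--Lehner--Li to pin down the old/new pieces. This makes transparent why the trace in b) is $2b(p)+c(p)$---the $2$ is the number of degeneracy maps from level $8$ to level $16$, and $c(p)$ comes from the single newform at level $16$---whereas the paper's approach simply records the answer. The cost of your route is that you still rely on the Cohen--Oesterl\'e dimension formula (or LMFDB) for the individual pieces, so the verification is not entirely computation-free; the gain is conceptual clarity and a proof that scales to analogous situations. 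One small point worth making explicit in your write-up is the Hecke equivariance of the conjugation by $W$ for $T_p$ with $p$ odd, which you mention but do not justify; it follows from the fact that the double cosets defining $T_p$ are unaffected by conjugation at the prime $2$.
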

Modular forms $f_1(\tau), f_2(\tau), f_3(\tau)$ and $f_5(\tau)$ are CM forms, and their Fourier coefficients are given in the following proposition. For some standard facts about CM modular forms see \cite[p.9]{ONO}.
\begin{proposition} Let $p$ be an odd prime and $q=e^{2\pi i \tau}$. We have
\begin{itemize}
\item[a)] $f_1(\tau)=\eta^2(4\tau)\eta^2(8\tau)=q\prod_{n=1}^\infty(1-q^{4n})^2(1-q^{8n})^2$, and
\[
a(p)=\begin{cases}
\pm 2x,\textrm{ if } p \equiv 1\pmod{4} \textrm{ and } p = x^2 + 4y^2\\0, \textrm{ if } p \equiv 3 \pmod{4},
\end{cases}
\]
\item[b)] $f_2(\tau)=\eta^2(\tau)\eta(2\tau)\eta(4\tau)\eta^2(8\tau)=q\prod_{n=1}^\infty (1-q^n)^2(1-q^{2n})(1-q^{4n})(1-q^{8n})^2$, and
\[
b(p)=\begin{cases}
  2(x^2-2y^2),\textrm{ if } p \equiv 1, 3\pmod{8} \textrm{ and } p = x^2 + 2y^2\\0, \textrm{ if } p \equiv 5, 7 \pmod{8},
\end{cases}
\]
\item[c)] $f_3(\tau)=\eta^6(4\tau)=q\prod_{n=1}^\infty (1-q^{4n})^6$, and
\[
c(p)=\begin{cases}
\pm 2(x^2-4y^2),\textrm{ if } p \equiv 1\pmod{4} \textrm{ and } p = x^2 + 4y^2\\0, \textrm{ if } p \equiv 3 \pmod{4},
\end{cases}
\]
\item[d)] $f_4(\tau)=\eta^4(2\tau)\eta^4(4\tau)=q\prod_{n=1}^\infty (1-q^{2n})^4 (1-q^{4n})^4$,
\item[e)] $f_5(\tau) = \eta^4(\tau)\eta^2(2\tau)\eta^4(4\tau)=q\prod_{n=1}^\infty (1-q^n)^4(1-q^{2n})^2(1-q^{4n})^4$, and
\[e(p)=\begin{cases}
2p^2-16 x^2 y^2,\textrm{ if } p \equiv 1\pmod{4} \textrm{ and } p = x^2 + y^2\\0, \textrm{ if } p \equiv 3 \pmod{4}.
\end{cases}
\]
\end{itemize}
\end{proposition}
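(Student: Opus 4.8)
I would separate the proposition into two independent tasks: the identification of each $f_i$ with the displayed eta-quotient, and the explicit evaluation of the coefficients $a(p),b(p),c(p),e(p)$ of the four CM forms. The plan is to settle the first by the standard theory of eta-quotients and a dimension count, and the second by recognizing the forms as theta series of Hecke Gr\"ossencharacters.

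For the first task, each product $\prod_{\delta\mid N}\eta(\delta\tau)^{r_\delta}$ has only positive exponents, so it is holomorphic on $\H$, and the cusp-order formula of \cite[Theorem~1.65]{ONO}, being a strictly positive combination of the $r_\delta$, forces strictly positive vanishing at every cusp; thus each quotient is automatically a cusp form. To get the correct weight, level and nebentypus I would apply Ligozat's transformation law \cite[Theorem~1.64]{ONO}: one checks the two congruences $\sum_{\delta\mid N}\delta\,r_\delta\equiv 0$ and $\sum_{\delta\mid N}(N/\delta)\,r_\delta\equiv 0\pmod{24}$ (each sum in fact equals $24$ in all five cases), and then reads off the weight $k=\tfrac12\sum_\delta r_\delta$ and the character $\chi(d)=\left(\frac{(-1)^k s}{d}\right)$ with $s=\prod_\delta\delta^{r_\delta}$. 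A short computation shows $k$ and $\chi$ agree with the claimed space for each $f_i$.

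It then remains to identify the quotient with the newform. In each case the relevant space of cusp forms, or its new subspace, is one-dimensional: for $f_1$ because $\dim S_2(\Gamma_0(32))=1$, for $f_2$ and $f_5$ from $S_3(\Gamma_1(8))=\mathbb{C}\cdot f_2$ and $S_5(\Gamma_1(4))=\mathbb{C}\cdot f_5$ in Proposition~\ref{prop:forms}, and for $f_3$ and $f_4$ from the standard dimension formulas together with the vanishing results of Proposition~\ref{prop:forms}, which rule out oldforms at those levels (for $f_4$ one uses the one-dimensional space $S_4(\Gamma_1(4)\cap\Gamma^0(2))=\mathbb{C}\cdot f_4$). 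Since the $q$-expansion of each quotient begins with $q$ — the leading exponent $\tfrac1{24}\sum_\delta\delta r_\delta$ equals $1$ throughout — it is the normalized generator of that one-dimensional space, hence equals the corresponding newform. For the coefficient formulas I would next observe that $f_1,f_3,f_5$ have complex multiplication by $\mathbb{Q}(i)$ and $f_2$ by $\mathbb{Q}(\sqrt{-2})$, forced by the vanishing at inert primes ($p\equiv 3\pmod 4$, resp.\ $p\equiv 5,7\pmod 8$), i.e.\ by self-twist under the corresponding quadratic character. By the theory of CM forms \cite[p.~9]{ONO}, each such newform is the theta series $\sum_{\mathfrak a}\psi(\mathfrak a)q^{N(\mathfrak a)}$ of a Gr\"ossencharacter $\psi$ of infinity type $z\mapsto z^{k-1}$; as both fields have class number one, a split prime $p=\pi\bar\pi$ contributes $\psi(\pi)+\psi(\bar\pi)$. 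Taking $\pi=x+2yi$ when $p=x^2+4y^2$ (respectively $\pi=x+y\sqrt{-2}$ and $\pi=x+yi$) and expanding $\pm(\pi^{k-1}+\bar\pi^{k-1})$ reproduces the stated values, for instance $\pi^4+\bar\pi^4=2(x^4-6x^2y^2+y^4)=2p^2-16x^2y^2=e(p)$.

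The delicate step, which I expect to be the main obstacle, is to pin down the Gr\"ossencharacter $\psi$ precisely, in particular its modulus, which is supported only at the prime above $2$. This matters because the modulus selects the distinguished generator of $\mathfrak p$ and is exactly what makes $a(p)=\pm 2x$ rather than $\pm 4y$; I would determine it by matching the explicit $q$-expansion of the eta-product against the theta series on a handful of split primes, which simultaneously confirms that it is $\pi^{k-1}$, and not a unit multiple, that enters. The residual signs — present for $a(p)$ and $c(p)$ but absent for $b(p)$ and $e(p)$ — are then governed by the image of the unit group under $u\mapsto u^{k-1}$ (trivial for $k-1=4$ and for $\mathbb{Z}[\sqrt{-2}]$, but not for $k-1=1,2$ in $\mathbb{Z}[i]$), so they are congruence-dependent and not determined by the representation $p=x^2+4y^2$ alone.
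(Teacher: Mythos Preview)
Your proposal is correct and substantially more detailed than what the paper actually does. The paper gives no proof of this proposition at all: it simply states the formulas, prefacing the subsection with the remark that these facts ``can be checked using Sage and LMFDB,'' and noting that ``for some standard facts about CM modular forms see \cite[p.~9]{ONO}.'' In other words, the paper treats the eta-product identifications and the CM coefficient formulas as well-known facts to be verified computationally or looked up, not as claims requiring argument.

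Your approach---Ligozat's criterion to place each eta-quotient in the correct space, a dimension count (leaning on Proposition~\ref{prop:forms}) to force equality with the newform, and the Gr\"ossencharacter description to extract the explicit values at split primes---is the standard way one would actually prove such statements, and it is sound. Your discussion of the sign ambiguity (present for $a(p)$ and $c(p)$, absent for $b(p)$ and $e(p)$) via the image of the unit group under $u\mapsto u^{k-1}$ is also correct and is exactly the point that a purely computational verification would obscure. So there is no gap; you have simply supplied an argument where the paper was content to cite databases.
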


\subsection{Families of universal elliptic curves}\label{sec:families}
Let $E^1, E^2, E^3, E^4$ and $E^5$ be elliptic surfaces fibered over the modular curves $X_1(4)$, $X(2,4)$, $X_1(8)$, $X(2,8)$ and $X(4)$ defined by affine equations (given with the sections of the corresponding orders):
\begin{align*}
E^1: Y^2&=X(X^2-2(t_1-2)X+t_1^2), \quad P_1=[t_1, 2t_1];4 P_1 = \mathcal{O}\\
E^2: Y^2&= X(X + t_2^2 - 2t_2 + 1)(X + t_2^2 + 2t_2 + 1),\\ P_2&=[1-t_2^2, 2(1-t_2^2)],\quad T_2=[- t_2^2 + 2t_2 - 1,0]; 4P_2=2 T_2= \mathcal{O} \\
E^3: Y^2&=X\left(X^2-2(t_3^4-2t_3^2-1)+(t_3-1)^4(t_3+1)^4\right), \\ Q_3&=[(t_3-1)(t_3+1)^3,2 t_3 (t_3-1)(t_3+1)^3]; 8 Q_3 = \mathcal{O}\\
E^4: Y^2&=X \left(X+\frac{64 t_4^4}{(t_4^2+1)^4}\right)\left(X+\frac{4(t_4-1)^4(t_4+1)^4}{(t_4^2+1)^4}\right),\\ Q_4&=\left[\frac{-16 t_4(t_4-1)(t_4+1)^3}{(t_4^2+1)^4}, \frac{32 t_4(t_4-1)(t_4+1)^3(t_4^2-2 t_4-1)}{(t_4^2+1)^5}\right], T_4=\left[-\frac{64 t_4^4}{(t_4^2+1)^4},0\right];\\8Q_4&=2T_4=\mathcal{O}\\
E^5: Y^2&=X(X + t_5^4 - 2 t_5^2 + 1)(X + t_5^4 + 2 t_5^2 + 1),\\
Q_5&=[1-t_5^4,2-2t_5^4], \quad Q^{'}_5=[-t_5^4 + 2 i t_5^3 + 2 t_5^2 - 2 i t_5 - 1,2 t_5^5 - 4 i t_5^4 - 4 t_5^3 + 4 i t_5^2 + 2 t_5];\\4Q_5&=4Q_5^{'}=\mathcal{O}\\
\end{align*}
together with the maps
\begin{align*}
h_1&:E^1\rightarrow X_1(4) \quad (X,Y,t_1) \mapsto t_1,\\
h_2&:E^2\rightarrow X(2,4)\quad (X,Y,t_2) \mapsto t_2,\\
h_3&:E^3\rightarrow X_1(8)\quad (X,Y,t_3) \mapsto t_3,\\
h_4&:E^4\rightarrow X(2,8)\quad (X,Y,t_4) \mapsto t_4,\\
h_5&:E^5\rightarrow X(4)\quad (X,Y,t_5) \mapsto t_5.\\
\end{align*}

Here we identify modular curves $X_1(4)$, $X(2,4)$, $X_1(8)$, $X(2,8)$ and $X(4)$ with $\mathbb{P}^1$ using parameters $t_1$, $t_2$, $t_3$, $t_4$ and $t_5$.

We have the natural maps
\begin{align*}
g_2&:X(2,4)\rightarrow X_1(4), \quad (E, T_2, P_2) \mapsto (E,P_2), \quad t_1 = 1-t_2^2\\
g_3&:X_1(8)\rightarrow X_1(4), \quad (E,Q_3)) \mapsto (E, 2Q_3),\quad t_1=(t_3^2-1)^2\\
g_4&:X(2,8)\rightarrow X_1(4), \quad (E, T_4, Q_4) \mapsto (E,2Q_4), \quad t_1 = \frac{16 t_4^2 (t_4-1)^2 (t_4+1)^2}{(t_4^2+1)^4} ,\\
g_5&:X(4)\rightarrow X_1(4), \quad (E, Q_5, Q^{'}_5) \mapsto (E,Q_5), \quad t_1 = 1-t_5^4 .\\
\end{align*}

Elliptic surfaces $E^1, E^2, E^3, E^4$ and $E^5$ are universal elliptic curves over the modular curves $X_1(4), X(2,4), X_1(8), X(2,8)$ and $X(4)$ respectively (for the universality, it is enough to check that for each $i$ the degree of $j$-invariant $j(E^i)$ is equal to the index of the corresponding subgroup in $\SL_2(\Z)$). Note that $X(4)$ is defined over $\Q(i)$.
\subsection{Model for $X(4,8)$}

In this section we calculate $\#X(4,8)(\F)$, where $p\equiv 1 \pmod{4}$. Denote by $t_5'$ and $t_4'$ pullbacks of function $t_4$ and $t_5$ on $X(2,8)$ and $X(4)$ along the natural maps $X(4,8) \rightarrow X(2,8)$, $(E,P,Q) \mapsto (E,2P,Q)$ and $X(4,8) \rightarrow X(4)$, $(E,P,Q) \mapsto (E,P,2Q)$. Then we have
\[
1-t_5'^4=\frac{16 t_4'^2 (t_4'-1)^2 (t_4'+1)^2}{(t_4'^2+1)^4},
\]
which implies
\[
(t_5'(t_4'^2+1))^2 = \pm (t_4'^2-2t_4'-1)(t_4'^2+2t_4'-1).
\]
The two genus one curves $y^2=\pm (x^2-2x-1)(x^2+2x-1)$ are isomorphic to the conductor $32$ elliptic curve $C: y^2=x^3-x$, hence, over $\Q(i)$ the modular curve $X(4,8)$ is isomorphic to $C$ (since $X(4,8)$ is connected). For a different proof see Lemma 13 in \cite{NAJ}.

 Therefore, for prime a $p\equiv 1 \pmod{4}$ (which splits in $\Q(i)$) we have that
\[
\#X(4,8)(\F) = \# C(\F) = p+1-a(p),
\]
since the modular form $f_1(\tau)=\sum_{n=1}^\infty a(n) q^n$ corresponds to $C$ by the modularity theorem (as it is the only newform in $S_2(\Gamma_0(32))$).

\subsection{Compatible families of $\ell$-adic Galois representations of $\Gal$}To each of these elliptic surfaces and to every positive integer $k$, we can associate two compatible families of $\ell$-adic Galois representations of $\Gal$. To ease notation, we denote by $\Gamma_j$, for $j=1,2,3,4$, groups $\Gamma_1(4), \Gamma(2,4), \Gamma_1(8) \textrm{ and } \Gamma(2,8)$ respectively, and by $X(\Gamma_j)$ the corresponding modular curve.

We define the representation $\rho_{j,\ell}^k$ of $\Gal$ as follows. Let $X(\Gamma_j)^0$ be the complement in $X(\Gamma_j)$ of the cusps. Denote by $i$ the inclusion of $X(\Gamma_j)^0$ into
$X(\Gamma_j)$, and by $h_j': E^{j,0} \rightarrow X(\Gamma_j)^0$ the
restriction of elliptic surface $h_j$ to $X(\Gamma_j)^0$. For a prime $\ell$ we obtain a sheaf
\[
\mathcal{F}_\ell^j=R^1 h_j'{_*{\Q_{\ell}}}
\]
on $X(\Gamma_j)^0$, and also a sheaf $i_*\textrm{Sym}^{k}\mathcal{F}_\ell$ on
$X(\Gamma_j)$ (here $\Q_\ell$ is the constant sheaf on the elliptic surface $E^{j,0}$, and $R^1$ is derived functor). The action of $\Gal$ on the $\Q_\ell$-space
\[
W_{k,\ell}^j = H^1_{et}(X(\Gamma_j)\otimes \overline{\Q}, i_*\textrm{Sym}^{k}\mathcal{F}_\ell^j)
\]
defines $\ell$-adic representation $\rho_{j,\ell}^k$ which is pure of weight $k+1$.

The second family, $\tilde{\rho}_{j,\ell}^k$, is $\ell$-adic realization of the motive associated to the spaces of cusp forms $S_{k+2}(\Gamma_j)$.
 For the construction see \cite[Section 5]{SASD}.

Similarly as in \cite[Section 3]{ALL}, since the elliptic surface $E^j$ is the universal elliptic curve over the modular curve $X(\Gamma_j)$, we can argue that these two representations are isomorphic, i.e. $\rho_{j,\ell}^k \sim \tilde{\rho}_{j,\ell}^k$. In particular, we will frequently use the following proposition.

\begin{proposition}\label{prop:trag}
Let $k \ge 1$ be an integer and $j\in \{1, 2, 3, 4 \}$. Denote by $B$ the set of normalized Hecke eigenforms in $S_{k+2}(\Gamma_j)$.
For every odd prime $\ell\ne p$ we have
$$Trace(\rho_{j,\ell}^k(Frob_p))=\sum_{f \in B} a_f(p),$$
where $a_f(p)$ is the $p$-th Fourier coefficient of the eigenform $f$, and $Frob_p$ is a geometric Frobenius at $p$.
\end{proposition}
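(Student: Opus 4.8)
The statement to prove is Proposition~\ref{prop:trag}, which asserts that the trace of geometric Frobenius on the \'etale cohomology representation $\rho_{j,\ell}^k$ equals the sum of $p$-th Fourier coefficients over the normalized Hecke eigenforms in $S_{k+2}(\Gamma_j)$. My plan is to deduce this from the isomorphism $\rho_{j,\ell}^k \sim \tilde\rho_{j,\ell}^k$ already asserted in the text, and then identify the trace of Frobenius on the motivic side with the sum of Hecke eigenvalues. Let me sketch both ingredients.

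\textbf{Reduction to the motivic family.} Since the excerpt has already argued, following \cite[Section 3]{ALL}, that the geometric representation $\rho_{j,\ell}^k$ coming from $W_{k,\ell}^j = H^1_{et}(X(\Gamma_j)\otimes\overline{\Q}, i_*\mathrm{Sym}^k\mathcal{F}_\ell^j)$ is isomorphic to the $\ell$-adic realization $\tilde\rho_{j,\ell}^k$ of the motive attached to $S_{k+2}(\Gamma_j)$, I would immediately reduce the problem to computing $\mathrm{Trace}(\tilde\rho_{j,\ell}^k(Frob_p))$. The first step is therefore just to invoke this isomorphism, noting that isomorphic representations have equal traces on every element of $\Gal$, in particular on $Frob_p$. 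The remaining work is entirely on the motivic side.

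\textbf{Eichler--Shimura / Deligne on the motivic side.} The core is the classical fact that the $\ell$-adic realization of the motive $M_f$ attached to a normalized Hecke eigenform $f \in S_{k+2}(\Gamma_j)$ is a two-dimensional $\Q_\ell$-representation of $\Gal$, unramified at $p$ (for $p\nmid N\ell$), whose geometric Frobenius $Frob_p$ has characteristic polynomial $T^2 - a_f(p) T + \chi(p)p^{k+1}$, so that $\mathrm{Trace}(Frob_p) = a_f(p)$; this is Deligne's realization of the Eichler--Shimura congruence relation. The motive attached to the whole space $S_{k+2}(\Gamma_j)$ decomposes (over $\overline{\Q}$, or after extending scalars in the coefficient field) as the direct sum $\bigoplus_{f} M_f$ indexed by the Galois orbits of normalized newforms, with oldform contributions accounted for by the appropriate multiplicities. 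Taking the trace of $Frob_p$ over this direct sum gives $\sum_{f\in B} a_f(p)$, where $B$ is the set of normalized Hecke eigenforms. I would spell out that the trace is additive over direct summands and that summing $a_f(p)$ over a Galois orbit recovers the rational trace on the corresponding simple summand, so the total equals $\sum_{f\in B} a_f(p)$ exactly as stated.

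\textbf{Main obstacle.} The genuinely delicate point is the bookkeeping of the index set $B$ together with the precise structure of the motive for $S_{k+2}(\Gamma_j)$ when the level is nonsquarefree and the group $\Gamma_j$ is one of $\Gamma(2,4),\Gamma(2,8)$ rather than a plain $\Gamma_1(N)$: one must be sure the $H^1_{et}$ of $i_*\mathrm{Sym}^k\mathcal{F}_\ell^j$ picks up exactly the cuspidal part (not the boundary/Eisenstein cohomology), and that every eigenform, including oldforms, appears with the correct multiplicity so that the trace is the \emph{unweighted} sum $\sum_{f\in B} a_f(p)$. I expect to handle the Eisenstein/boundary terms by using that $i_*$ of the symmetric power sheaf isolates the interior (parabolic) cohomology, so that $W_{k,\ell}^j$ is pure of weight $k+1$ (as already noted in the text) and matches the cuspidal motive; the multiplicity bookkeeping for the congruence groups $\Gamma(M,N)$ I would pin down via the identification of $X(\Gamma_j)$ with the corresponding modular curve and its universal family, exactly as set up in Section~\ref{sec:families}. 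The weight-purity statement already recorded in the construction is what guarantees there is no spurious contribution, and this is the crux of making the clean equality hold.
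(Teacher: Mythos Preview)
Your proposal is correct and follows exactly the line the paper takes: the paper does not give a separate proof of this proposition but simply records it as an immediate consequence of the isomorphism $\rho_{j,\ell}^k \sim \tilde\rho_{j,\ell}^k$ (argued via \cite[Section~3]{ALL}) together with Scholl's motivic construction \cite{SASD} and the standard Deligne/Eichler--Shimura identification of the trace of Frobenius on the cuspidal motive with the Hecke trace $\sum_{f\in B} a_f(p)$. Your discussion of the purity and multiplicity bookkeeping is more explicit than anything the paper writes, but it is precisely the content implicit in those citations.
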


\subsection{Traces of Frobenius}

To simplify notation, denote $\mathcal{F}= R^1 h_j'{_*{\Q_{\ell}}}$ and $W=H^1_{et}(X(\Gamma_j)\otimes \overline{\Q}, i_*\mathcal{F})$. We denote by $Frob_p\in \Gal$ a geometric Frobenius at $p$. 
We have the following well known result.
\begin{theorem}
\label{thm:trace}
The following are true:
\begin{itemize}
\item[(1)] We have that
  \[
  Trace(Frob_p|W)=-\sum_{t\in X(\Gamma_j)(\mathbb{F}_p)} Trace(Frob_p|(i_*\mathcal{F})_t).
  \]
\item[(2)] If the fiber $E^j_t := h_j^{-1}(t)$ is smooth, then
  \[
  Trace(Frob_p|(i_*\mathcal{F})_t)=Trace(Frob_p|H^1(E^j_t,
  \Q_\ell))=p+1-\#E^j_t(\mathbb{F}_p).
  \]
	Furthermore,
	$$
	Trace(Frob_p|(i_*\textrm{Sym}^2\mathcal{F})_t)=Trace(Frob_p|(i_*\mathcal{F})_t)^2-p,
	$$
	and
	$$
	Trace(Frob_p|(i_*\textrm{Sym}^3\mathcal{F})_t)=Trace(Frob_p|(i_*\mathcal{F})_t)^3-2p \cdot Trace(Frob_p|(i_*\mathcal{F})_t).
  $$
\item[(3)] If the fiber $E^j_t$ is singular, then
  \begin{equation*}
    Trace(Frob_p|(i_*\mathcal{F})_t)=
    \begin{cases}
      1 & \text{if the fiber is split multiplicative}, \\
      -1 & \text{if the fiber is nonsplit multiplicative},\\
      0 & \text{if the fiber is additive}.
    \end{cases}		
	\end{equation*}	
		Furthermore, $Trace(Frob_p|(i_*Sym^2\mathcal{F})_t)=1$ if the fiber is multiplicative or potentially multiplicative (e.g. fiber $E_{\infty}^1$), and
	\begin{equation*}
    Trace(Frob_p|(i_*Sym^3\mathcal{F})_t)=
    \begin{cases}
      1 & \text{if the fiber is split multiplicative}, \\
		 -1 & \text{if the fiber is nonsplit multiplicative},\\
      0 & \text{if the fiber is potentially multiplicative}.
    \end{cases}		
	\end{equation*}

\begin{proof}
(1) is the consequence of the Lefschetz fixed point formula (\cite{Del}, Rapport 3.2).

For good $t$, $(i_*\mathcal{F})_t=H^1(E^j_t,\Q_\ell)$, hence the first formula in (2) follows. Note that if $\lambda_1$ and $\lambda_2$ are eigenvalues of $Frob_p$ acting on $(i_*\mathcal{F})_t$, then $\lambda_1^k, \lambda_1^{k-1}\lambda_2,\ldots, \lambda_1\lambda_2^{k-1}, \lambda_2^k$ are the eigenvalues of $Sym^k Frob_p$ acting on $Sym^k (i_* \mathcal{F})_t$. Since $(i_* Sym^k \mathcal{F})_t=Sym^k (i_* \mathcal{F})_t$, the second part of $(2)$ follows (note that determinant of $Frob_p$ is equal to $p$).

In order to calculate trace of $Frob_p$ at bad fibers, we follow 3.7 of \cite{Sch2}. If $t \in X(\Gamma_j)(\F)$, let $K$ be the function field of the connected component of $X(\Gamma_j)\otimes \F$ containing $t$. Let $v$ be the discrete valuation of $K$ corresponding to $t$, and $K_v$ the completion. Let $G_v$ be the absolute Galois group $\textrm{Gal}(K_v^\textrm{sep}/K_v)$, $I_v$ the inertia group, and $F_v$ a geometric Frobenius. Write $H_v = H^1(E^j\otimes K_v^\textrm{sep}, \Q_\ell)$. Then $H_v$ is a $G_v$-module and 
$$Trace(Frob_p|(i_*\textrm{Sym}^k\mathcal{F})_t ) = Trace(F_v| (Sym^k H_v)^{I_v}).$$ In the case of multiplicative reduction $H_v^{I_v}$ is one dimensional and $F_v$ acts on it as $1$ if the reduction is split multiplicative, and as $-1$ if the reduction is nonsplit multiplicative. If the reduction is additive, then $H_v^{I_v}=\{0\}$. The first formula in (3) follows (see Section 10 of Chapter IV in \cite{Sil2}). 

In our situation (see Lemma 5.2 and Exercises 5.11, 5.13 in \cite{Sil2}), inertia subgroup $I_v$ acts on $H_v$ as ${\sm \chi * 0  \chi}$, where $*$ is not identically zero, and $\chi$ is the character associated to $L_v=K_v\left(\sqrt{\frac{-c_4(E^j\otimes K_v)}{c_6(E^j\otimes K_v)}}\right)/K_v$. If reduction at $v$ is multiplicative this character is unramified (or trivial), and if the reduction is additive it is ramified. Denote by $Y$ a generator of $H_v^{I(K_v^{sep}/L_v)}$. Then a direct computation shows that $Y^2$ and $Y^3$ generate $(Sym^2 H_v)^{I(K_v^{sep}/L_v)}$ and $(Sym^3 H_v)^{I(K_v^{sep}/L_v)}$ respectively. If the reduction is multiplicative, then $I_v=I(K_v^{sep}/L_v)$. If the reduction is potentially multiplicative then $I_v$ acts on $Y$ as $\pm 1$. Hence $(Sym^3 H_v)^{I_v}=\{0\}$, and $(Sym^2 H_v)^{I_v}$ is generated by $Y^2$. The claim follows. 
 
\end{proof}
	
\end{itemize}
\end{theorem}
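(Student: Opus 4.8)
The plan is to reduce all three statements to the Grothendieck--Lefschetz trace formula, supplemented by a fibrewise analysis of the Frobenius and inertia action, organised in increasing order of difficulty. For part (1), I would apply the Grothendieck--Lefschetz trace formula to the constructible sheaf $\mathcal{G}=i_*\textrm{Sym}^k\mathcal{F}$ on the complete curve $X(\Gamma_j)$. Since $p$ does not divide the level, the modular curve and its universal family have good reduction at $p$, so smooth and proper base change identify the étale cohomology over $\overline{\Q}$ with that over $\overline{\F}$ compatibly with the action of $Frob_p$; the trace formula then reads
\[
\sum_{t\in X(\Gamma_j)(\F)} Trace(Frob_p|\mathcal{G}_t)=\sum_{i=0}^{2} (-1)^i Trace\bigl(Frob_p|H^i_{et}(X(\Gamma_j)\otimes\overline{\F},\mathcal{G})\bigr).
\]
The essential point is that the geometric monodromy of the universal elliptic curve is Zariski dense in $\SL_2$, so for $k\ge 1$ the local system $\textrm{Sym}^k\mathcal{F}$ has no nonzero monodromy invariants; hence $H^0$ of the middle extension vanishes, and by Poincaré duality so does $H^2$. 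Only $H^1=W$ survives, and the sign $(-1)^1$ yields the minus sign in the claim.

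For part (2) I would note that at a point $t$ with smooth fibre, proper smooth base change makes the stalk of $\mathcal{F}=R^1h_j'{_*}\Q_\ell$ equal to $H^1(E^j_t,\Q_\ell)$, and that $i_*$ leaves stalks at interior points unchanged; the identity $Trace(Frob_p|H^1(E^j_t,\Q_\ell))=p+1-\#E^j_t(\F)$ is just the Lefschetz formula for the elliptic curve itself. For the symmetric powers I would diagonalise: letting $\lambda_1,\lambda_2$ be the Frobenius eigenvalues on $H^1$, purity forces $\lambda_1\lambda_2=p$, so writing $a=\lambda_1+\lambda_2=Trace(Frob_p|(i_*\mathcal{F})_t)$ one gets $Trace(\textrm{Sym}^2)=\lambda_1^2+\lambda_1\lambda_2+\lambda_2^2=a^2-p$ and $Trace(\textrm{Sym}^3)=a^3-2pa$, which are exactly the stated Newton-type identities.

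Part (3), the singular fibres, is where the real work lies, and I expect it to be the main obstacle. The key structural fact is that the stalk of the middle extension at a bad $t$ is the inertia invariants $(i_*\mathcal{F})_t=H_v^{I_v}$, with $H_v=H^1(E^j\otimes K_v^{\textrm{sep}},\Q_\ell)$ and $v$ the place of the function field corresponding to $t$; I would therefore localise at $v$ and read the action of $Frob_p$ off the local monodromy. For multiplicative reduction inertia acts unipotently, so $H_v^{I_v}$ is one-dimensional and $Frob_p$ acts through the unramified quadratic character separating split from nonsplit reduction, giving $+1$ and $-1$; for additive reduction the invariants vanish, giving $0$. For the symmetric powers I would use the refinement that $I_v$ acts as $\sm{\chi}{*}{0}{\chi}$, where $\chi$ cuts out $L_v=K_v\bigl(\sqrt{-c_4/c_6}\bigr)$ and the off-diagonal entry is not identically zero. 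Taking a generator $Y$ of the invariants under $I(K_v^{\textrm{sep}}/L_v)$, one checks that $Y^2$ and $Y^3$ generate the corresponding invariants of $\textrm{Sym}^2$ and $\textrm{Sym}^3$; whether these survive the full $I_v$ then depends on whether $\chi$ is unramified (the multiplicative case) or the nontrivial quadratic character (the potentially multiplicative case), which is precisely what forces the vanishing of the $\textrm{Sym}^3$-invariants in the potentially multiplicative case. The delicate points I would need to nail down are the precise reduction types at the cusps of each $X(\Gamma_j)$ and a consistent normalisation of geometric Frobenius, so that the signs attached to the unramified character come out correctly.
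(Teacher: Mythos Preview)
Your proposal is correct and follows essentially the same route as the paper: Lefschetz trace formula for (1), the eigenvalue computation with $\lambda_1\lambda_2=p$ for (2), and for (3) the local analysis via $H_v^{I_v}$ with inertia acting as $\sm{\chi}{*}{0}{\chi}$ where $\chi$ cuts out $K_v(\sqrt{-c_4/c_6})$, using the generator $Y$ exactly as the paper does. Your explicit justification of the vanishing of $H^0$ and $H^2$ via Zariski-density of monodromy and Poincar\'e duality is a small elaboration beyond what the paper spells out, but the argument is otherwise the same.
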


\section{Results}\label{sec:results}

\subsection{$X_1(4)$}

The universal elliptic curve $E^1$ over $X_1(4)$ has three singular fibers (over the cusps): additive $t=\infty$, split multiplicative $t=0$, and fiber $t=1$ which is split multiplicative if $p\equiv 1 \pmod{4}$ and nonsplit multiplicative if $p \equiv 3 \pmod{4}$. Moreover, the additive fiber $t=\infty$ becomes (split) multiplicative over quadratic extension of the base field. Denote by $\mathcal{F}= R^1 h_1'{_*{\Q_{\ell}}}$.

\begin{proposition} \label{prop:prva}
\begin{itemize}
	
	\item [a)]
	\[
		\sum_{t \ne 0,1} P(t) =
		\begin{cases}
		p^2-p \textrm{ if } p \equiv 1 \pmod{4},\\
		p^2-p-2 \textrm{ if } p \equiv 3 \pmod{4}.
		\end{cases}
	\]
	\item[b)]
	\[
		\ds\sum_{t \ne 0,1} P(t)^2 =
		\begin{cases}
			p^3+p^2-p-1, \textrm{ if } p\equiv 1 \pmod{4},\\
			p^3+p^2-5p-5, \textrm{ if } p\equiv 3 \pmod{4}.\\
		\end{cases}
	\]
	\item[c)]
	\[
		\ds\sum_{t \ne 0, 1} P(t)^3 =	
		\begin{cases}
			p^4+4p^3-4p-3+e(p), \textrm{ if } p \equiv 1 \pmod{4},\\
			p^4+4p^3-6p^2-20p-11+e(p), \textrm{ if } p \equiv 3 \pmod{4}.\\
		\end{cases}
	\]	
\end{itemize}
\end{proposition}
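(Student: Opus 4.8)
The plan is to apply the trace-of-Frobenius formalism of Theorem \ref{thm:trace} together with Proposition \ref{prop:trag} to the sheaves $i_*\mathrm{Sym}^k\mathcal{F}$ on $X_1(4)\cong\mathbb{P}^1$ for $k=1,2,3$, and to extract the three sums $\sum_{t\ne0,1}P(t)^k$ from the resulting trace identities. For the $p-2$ smooth fibers $t\in\F\setminus\{0,1\}$ I would write $a_t:=p+1-P(t)=Trace(Frob_p|(i_*\mathcal{F})_t)$, so that Theorem \ref{thm:trace}(2) gives the fiberwise traces $a_t$, $a_t^2-p$, and $a_t^3-2p\,a_t$ for $k=1,2,3$. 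Combining part (1) of that theorem with Proposition \ref{prop:trag} produces, for each $k$, the identity
\[
\sum_{t\in X_1(4)(\F)}Trace(Frob_p|(i_*\mathrm{Sym}^k\mathcal{F})_t)=-\sum_{f}a_f(p),
\]
where $f$ ranges over the normalized eigenforms of $S_{k+2}(\Gamma_1(4))$. By Proposition \ref{prop:forms}a) the right side vanishes for $k=1,2$ and equals $-e(p)$ for $k=3$, since $S_3(\Gamma_1(4))=S_4(\Gamma_1(4))=0$ while $S_5(\Gamma_1(4))=\mathbb{C}\cdot f_5$.

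Next I would compute the three cusp contributions to the left side using the reduction types listed just before the Proposition and the singular-fiber rules in Theorem \ref{thm:trace}(3). With $\epsilon:=\g{-1}$, the cusp $t=0$ (split multiplicative) contributes $1$ for every $k$; the cusp $t=1$ (split for $p\equiv1$, nonsplit for $p\equiv3\pmod4$) contributes $\epsilon,\,1,\,\epsilon$; and the cusp $t=\infty$ (additive, potentially multiplicative) contributes $0,\,1,\,0$. Subtracting these and solving for the smooth part yields
\[
\sum_{t\ne0,1}a_t=-(1+\epsilon),\qquad \sum_{t\ne0,1}a_t^2=p^2-2p-3,\qquad \sum_{t\ne0,1}a_t^3=-e(p)-(1+\epsilon)(1+2p),
\]
where in the last two I have absorbed $\sum_{t\ne0,1}p=(p-2)p$ and $2p\sum_{t\ne0,1}a_t$ using the first identity.

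Finally I would pass back to $P(t)=p+1-a_t$ and expand $(p+1-a_t)^k$ binomially, summing over the $p-2$ admissible values; e.g. $\sum P(t)^2=(p-2)(p+1)^2-2(p+1)\sum a_t+\sum a_t^2$, and $\sum P(t)^3$ is the analogous four-term combination. Substituting the displayed sums and separating the cases $\epsilon=\pm1$ reproduces exactly the four polynomial-plus-$e(p)$ expressions in parts a), b) and c).

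I expect the only real difficulty to be bookkeeping rather than conceptual: keeping the singular-fiber traces under the symmetric powers consistent. The key asymmetry, flagged in Theorem \ref{thm:trace}(3), is that the potentially multiplicative fiber at $t=\infty$ contributes $1$ to $\mathrm{Sym}^2$ but $0$ to $\mathrm{Sym}^1$ and $\mathrm{Sym}^3$; likewise the split/nonsplit dichotomy at $t=1$ is the sole origin of the $p\bmod4$ dependence in a) and c). Since a sign slip in any cusp total would corrupt only the lower-degree coefficients of the final polynomials, I would cross-check the cusp contributions against a direct point count for a few small primes.
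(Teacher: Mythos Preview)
Your proposal is correct and follows essentially the same approach as the paper: both apply Theorem \ref{thm:trace} and Proposition \ref{prop:trag} (via Proposition \ref{prop:forms}a)) to the sheaves $i_*\mathrm{Sym}^k\mathcal{F}$ on $X_1(4)$ for $k=1,2,3$, use the listed reduction types at the three cusps to compute the singular contributions, and then solve for the power sums. The only cosmetic difference is that you first isolate $\sum a_t^k$ and then binomially expand $P(t)=(p+1)-a_t$, whereas the paper expands $(p+1-P(t))^k$ directly inside the trace identity; the bookkeeping is identical.
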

\begin{proof}
a) Parts (1) and (2) of Theorem \ref{thm:trace} imply that

\begin{eqnarray*}
Trace(Frob_p|W_{1,\ell}^1)&=&-\sum_{t \in \textrm{ cusps}}Trace(Frob_p|(i_*\mathcal{F})_t)-\sum_{t\ne 0,1}\left(p+1-P
(t)\right),\\
&=& \sum_{t \ne 0,1} P(t) -(p^2-p-2)-\sum_{t \in \textrm{ cusps}}Trace(Frob_p|(i_*\mathcal{F})_t).\\
\end{eqnarray*}

Since $\dim(S_3(\Gamma_1(4)))=0$ it follows that $Trace(Frob_p|W_{1,\ell}^1)=0$. Claim now follows from Theorem \ref{thm:trace} (3) and the description of reduction types of singular fibers for $p\equiv 1 \pmod{4}$ and $p \equiv 1 \pmod{4}$. \\
b) Since by Theorem \ref{thm:trace}(3) the trace at every singular fiber is $1$, we have that
\begin{eqnarray*}
Trace(Frob_p|W_{2,\ell}^1)&=&-\sum_{t \in \textrm{ cusps}}Trace(Frob_p|(i_*\textrm{Sym}^2\mathcal{F})_t)-\sum_{t\ne 0,1}\left(\left(p+1-P
(t)\right)^2-p\right),\\
&=& -3-\sum_{t \ne 0,1} P(t)^2-p^3+p^2+p+2+2(p+1)\sum_{t\ne 0,1}P(t).\\
\end{eqnarray*}
The claim follows from the part a) since $\dim(S_4(\Gamma_1(4)))=0$ (hence $Trace(Frob_p|W_{2,\ell}^1)=0$).\\
c) Theorem \ref{thm:trace} implies that
\begin{eqnarray*}
Trace(Frob_p|W_{3,\ell}^1)&=&-\sum_{t \in \textrm{ cusps}}Trace(Frob_p|(i_*\textrm{Sym}^3\mathcal{F})_t)-\sum_{t\ne 0,1}\left(\left(p+1-P
(t)\right)^3-2p(p+1-P(t))\right),\\
&=&\sum_{t \ne 0,1}P(t)^3-3(p+1)\sum_{t \ne 0,1} P(t)^2+(3(p+1)^2-2p)\sum_{t\ne 0,1}P(t)\\
&-&(p-2)(p+1)^3+2p(p-2)(p+1)-\sum_{t \in \textrm{ cusps}}Trace(Frob_p|(i_*\textrm{Sym}^3\mathcal{F})_t).\\
\end{eqnarray*}
The claim follows from the parts a) and b) and Proposition \ref{prop:forms}a) (hence $Trace(Frob_p|W_{3,\ell}^1)=e(p)$). Note that 
\[
		\sum_{t \in \textrm{cusps}} Trace(Frob_p|(i_*\textrm{Sym}^3\mathcal{F})_t) =
			\begin{cases}
			0+1+1=2, \textrm{ if } p \equiv 1 \pmod{4},\\
			0+1+(-1)=0, \textrm{ if } p \equiv 3 \pmod{4}.\\
			\end{cases}
\]

\end{proof}

\subsection{$X(2,8)$}  Universal elliptic curve $E^4$ over $X(2,8)$ has $10$ singular fibers: $t_4=\pm i$ (two cusps above $t_1=\infty$) and $t_4^2+2t_4-1=0$ and $t_4^2-2t_4-1=0$ (four cusps above $t_1=1$) which are split multiplicative if $p \equiv 1 \pmod{4}$ and nonsplit multiplicative otherwise, and split multiplicative $t_4=\pm 1, 0, \infty$(four cusps above $t_1=0$). Denote $\mathcal{F}= R^1 h_4'{_*{\Q_{\ell}}}$.

\begin{proposition} \label{prop:T2}
\begin{itemize}
  \item [a)]
	\[
		\sum_{t \in T_2} 1 =
			\begin{cases}
				\frac{p-9}{8}, \textrm{ if } p \equiv 1 \pmod{8},\\
				\frac{p-3}{8}, \textrm{ if } p \equiv 3 \pmod{8},\\
				\frac{p-5}{8}, \textrm{ if } p \equiv 5 \pmod{8},\\
				\frac{p-7}{8}, \textrm{ if } p \equiv 7 \pmod{8}.\\
			\end{cases}			
	\]
	\item [b)]
	\[
		\sum_{t \in T_2} P(t) =
			\begin{cases}
			\frac{p^2-8p+1+2b(p)+c(p)}{8}, \textrm{ if } p \equiv 1 \pmod{8},\\
			\frac{p^2-2p+1+2b(p)+c(p)}{8}, \textrm{ if } p \equiv 3 \pmod{8},\\
			\frac{p^2-4p+1+2b(p)+c(p)}{8}, \textrm{ if } p \equiv 5 \pmod{8},\\
			\frac{p^2-6p-7+2b(p)+c(p)}{8}, \textrm{ if } p \equiv 7 \pmod{8}.\\
			\end{cases}
	\]
	
\end{itemize}
\end{proposition}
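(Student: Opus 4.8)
The plan is to exploit the degree-$8$ covering $g_4\colon X(2,8)\to X_1(4)$ from Section \ref{sec:families} together with its moduli interpretation. The key observation is that $t\in T_2$ precisely when $E_t$, with its distinguished order-$4$ point $R$, admits over $\F$ the extra level structure parametrized by $X(2,8)$: a rational point $Q$ with $2Q=R$ (automatically of order $8$) and full rational $2$-torsion. Equivalently, $t\in T_2$ if and only if the fibre $g_4^{-1}(t)$ contains an $\F$-rational point. First I would verify, from the index $[\Gamma_1(4):\Gamma_1(8)\cap\Gamma^0(2)]=8$ and the torsion-freeness of $\Gamma_1(4)$, that $g_4$ is étale of degree $8$ over the non-cuspidal locus $Y_1(4)$. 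I would then show that over each $t\in T_2$ all eight geometric preimages are in fact $\F$-rational: once one $Q$ with $2Q=R$ and the full $2$-torsion are rational, the remaining admissible choices of such a $Q$ and of the auxiliary order-$2$ point differ only by rational $2$-torsion, so the whole fibre is defined over $\F$.

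For part a) I would count $\F$-points on $X(2,8)$ directly. Since this curve has genus zero with a rational point, it has exactly $p+1$ points over $\F$; removing the $c(2,8)$ rational cusps computed in Section \ref{sec:curves} leaves $(p+1)-c(2,8)$ affine points, each lying over some $t\in T_2$ with fibre of size exactly $8$. Hence $\#T_2=\big((p+1)-c(2,8)\big)/8$, and substituting the four values of $c(2,8)$ gives the stated case-by-case formula.

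For part b) I would run the trace-of-Frobenius machinery for $j=4$, $k=1$. By Propositions \ref{prop:trag} and \ref{prop:forms}(b) we have $Trace(Frob_p\mid W_{1,\ell}^4)=\sum_{f\in B}a_f(p)=Trace(T_p\mid S_3(\Gamma_1(8)\cap\Gamma^0(2)))=2b(p)+c(p)$. On the other hand Theorem \ref{thm:trace}(1) expands this trace as a sum over all $t_4\in X(2,8)(\F)$. Splitting into the $8\#T_2$ smooth affine fibres — over which $\#E^4_{t_4}(\F)=P(g_4(t_4))$ by universality and Theorem \ref{thm:trace}(2) gives local trace $p+1-P(g_4(t_4))$ — and the rational cusps, and using that each affine fibre of $g_4$ has size $8$ (so that each $t\in T_2$ is counted $8$ times), I obtain the master identity
\begin{equation*}
8\sum_{t\in T_2}P(t)=2b(p)+c(p)+8(p+1)\,\#T_2+C,
\end{equation*}
where $C$ is the signed number of rational cusps, each split multiplicative cusp contributing $+1$ and each nonsplit multiplicative cusp $-1$ by Theorem \ref{thm:trace}(3). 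Inserting $\#T_2$ from part a) and dividing by $8$ then yields the four claimed formulas once $C$ is determined in each residue class.

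The main work — and the step I expect to be most delicate — is the cusp bookkeeping: for each $p\bmod 8$ I must decide which of the ten geometric cusps of $X(2,8)$ (the two at $t_4=\pm i$ over $t_1=\infty$, the four roots of $t_4^2\pm 2t_4-1$ over $t_1=1$, and the four at $t_4=\pm1,0,\infty$ over $t_1=0$) are $\F$-rational, and whether each rational one is split or nonsplit multiplicative. Rationality of $\pm i$ is governed by $\g{-1}$ and that of the roots of $t_4^2\pm 2t_4-1$ by $\g{2}$, while the split/nonsplit dichotomy for the cusps over $t_1=1,\infty$ follows the $p\equiv 1,3\pmod 4$ rule recorded just before the proposition (the four cusps over $t_1=0$ being always split). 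Carrying this out should give $C=10,4,6,0$ for $p\equiv 1,3,5,7\pmod 8$ respectively, which is exactly what is needed to match the four cases; this same analysis of course also produces the values of $c(2,8)$ used in part a).
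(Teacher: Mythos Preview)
Your proposal is correct and follows essentially the same route as the paper: identify $T_2$ with the image of the non-cuspidal $\F$-points of $X(2,8)$ under the degree-$8$ map $g_4$, use $\#X(2,8)(\F)=p+1$ and the values of $c(2,8)$ for part a), and for part b) combine Theorem \ref{thm:trace} with $Trace(Frob_p\mid W_{1,\ell}^4)=2b(p)+c(p)$ from Proposition \ref{prop:forms}(b), arriving at the same master identity and the same cusp contributions $C=10,4,6,0$. Your explicit justification that all eight preimages over each $t\in T_2$ are $\F$-rational (since they differ by rational $2$-torsion) is a point the paper leaves implicit.
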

\begin{proof}
a) Since $T_2$ is equal to the image of $\F$-points (which are not cusps) on $X(2,8)_{\F}$ under the natural map $g_4:X(2,8) \rightarrow X_1(4)$ of degree $8$, we have
$\sum_{t \in T_2} 1 = \frac{p+1-c(2,8)}{8},$ since $\#X(2,8)(\F)=p+1$. The claim follows.\\
b) From the definition of $T_2$ we have $\sum_{t \in T_2}P(t)=\ds\frac{1}{8}\sum_{t\in g_3(X(2,8)(\F)) \atop t\not \in  \textrm{ cusps}}P(t)$.
Theorem \ref{thm:trace} implies that (the sum is over $X(2,8)(\F)$)
\begin{eqnarray*}
Trace(Frob_p|W_{1,\ell}^4)&=&-\sum_{t \in \textrm{ cusps}}Trace(Frob_p|(i_*\mathcal{F})_t)-\sum_{t\notin  \textrm{cusps}}\left(p+1-P
(t)\right),\\
&=&-\sum_{t \in \textrm{ cusps}}Trace(Frob_p|(i_*\mathcal{F})_t) -(p+1)(p+1-c(2,8))+\sum_{t \notin \textrm{cusps}}P(t).\\
\end{eqnarray*}
It follows from Proposition \ref{prop:forms}b) that $Trace(Frob_p|W_{1,\ell}^4)=2b(p)+c(p)$. The claim follows since Theorem \ref{thm:trace}(3) implies
\[
		\sum_{t \in \textrm{cusps}} Trace(Frob_p|(i_*\mathcal{F})_t) =
			\begin{cases}
			10, \textrm{ if } p \equiv 1 \pmod{8},\\
			4, \textrm{ if } p \equiv 3 \pmod{8},\\
			6, \textrm{ if } p \equiv 5 \pmod{8},\\
			0, \textrm{ if } p \equiv 7 \pmod{8}.\\
			\end{cases}
\]

\end{proof}

\subsection{$X_1(8)$} Universal elliptic curve $E^3$ over $X_1(8)$ has  $6$ singular fibers: split multiplicative $t_3=\infty$ and $t_3=\pm 1$ (two cusps above $t_1=0$) and $t_3=0, \pm\sqrt{2}$ (three cusps above $t_1=1$) which are split multiplicative if $p \equiv 1 \pmod{4}$ and nonsplit multiplicative otherwise. Denote $\mathcal{F}= R^1 h_3'{_*{\Q_{\ell}}}$.

\begin{proposition}\label{prop:T3}
\begin{itemize}
	\item [a)]
	\[
		\sum_{t \in T_3} 1 =
			\begin{cases}
				\frac{3p-11}{8}, \textrm{ if } p \equiv 1 \pmod{8},\\
				\frac{3p-9}{8}, \textrm{ if } p \equiv 3 \pmod{8},\\
				\frac{3p-7}{8}, \textrm{ if } p \equiv 5 \pmod{8},\\
				\frac{3p-13}{8}, \textrm{ if } p \equiv 7 \pmod{8}.\\
			\end{cases}	
	\]
	\item [b)]
	\[
		\sum_{t \in T_3} P(t) =
			\begin{cases}
			\frac{3p^2-8p+2b(p)-c(p)+3}{8}, \textrm{ if } p \equiv 1 \pmod{8},\\
			\frac{3p^2-6p+2b(p)-c(p)-5}{8}, \textrm{ if } p \equiv 3 \pmod{8},\\
			\frac{3p^2-4p+2b(p)-c(p)+3}{8}, \textrm{ if } p \equiv 5 \pmod{8},\\
			\frac{3p^2-10p+2b(p)-c(p)-13}{8}, \textrm{ if } p \equiv 7 \pmod{8}.\\
			\end{cases}
	\]
\end{itemize}
\end{proposition}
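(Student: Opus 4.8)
The plan is to mimic the proof of Proposition \ref{prop:T2}, working this time with the universal elliptic curve $E^3$ over $X_1(8)$ and the natural map $g_3:X_1(8)\to X_1(4)$, $t_1=(t_3^2-1)^2$. The one genuinely new feature, absent in the $X(2,8)$ case, is that $g_3$ does not have constant fibre size over $T_3$ on $\F$-points, so the crux is to count, for each $t\in T_3$, how many $\F$-rational points of $X_1(8)$ lie above it.

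First I would identify $T_3$ with the image $g_3\big(X_1(8)(\F)\setminus\{\text{cusps}\}\big)$: a non-cuspidal $\F$-point of $X_1(8)$ is a pair $(E_t,Q)$ with $Q\in E_t(\F)$ of order $8$ and $2Q=R$, and since $R$ has order $4$ this forces its image $t$ into $T_3$; conversely every $t\in T_3$ arises so. The key observation is that, given one rational solution $Q$ of $2Q=R$, every other solution is $Q+S$ with $S\in E_t[2]$, and $Q+S$ is $\F$-rational if and only if $S\in E_t[2](\F)$. Hence the number of rational preimages of $t$ equals $\#E_t[2](\F)$; because $2R=(0,0)$ is always rational this number is $4$ exactly when $t\in T_2$ and $2$ otherwise. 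Since $X_1(8)$ has genus zero, $\#X_1(8)(\F)=p+1$, and therefore
\[
p+1-c(8)=\#\big(X_1(8)(\F)\setminus\text{cusps}\big)=4\#T_2+2(\#T_3-\#T_2)=2\#T_2+2\#T_3 .
\]
Part a) follows by solving for $\#T_3$ and substituting Proposition \ref{prop:T2}a) and the values of $c(8)$ from Section \ref{sec:curves}.

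For part b) I would run the trace computation of Theorem \ref{thm:trace} for $\mathcal{F}=R^1(h_3')_*\Q_\ell$ just as in Proposition \ref{prop:T2}b), using that the fibre of $E^3$ over $t_3$ is isomorphic to $E_{g_3(t_3)}$, so that $\#E^3_{t_3}(\F)=P(g_3(t_3))$. Grouping the non-cuspidal $t_3$ by their image and using the same preimage weights as above gives
\[
\sum_{t_3} \#E^3_{t_3}(\F)=2\sum_{t\in T_2}P(t)+2\sum_{t\in T_3}P(t).
\]
On the other hand $S_3(\Gamma_1(8))=\mathbb{C}\cdot f_2$ is one-dimensional (Proposition \ref{prop:forms}c), so by Proposition \ref{prop:trag} we have $Trace(Frob_p|W_{1,\ell}^3)=b(p)$; combined with parts (1)--(3) of Theorem \ref{thm:trace} this evaluates $\sum_{t_3}\#E^3_{t_3}(\F)$ in closed form. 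Solving the displayed identity for $\sum_{t\in T_3}P(t)$ and inserting Proposition \ref{prop:T2}b) yields the stated formulas, the signature term $2b(p)-c(p)$ arising as $\tfrac12 b(p)$ minus the $\tfrac{2b(p)+c(p)}{8}$ coming from the $T_2$ sum.

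The main obstacle is precisely this fibre-size bookkeeping together with the cusp accounting: unlike the degree-$8$ map $g_4$, the map $g_3$ is $4$-to-$1$ geometrically but only $2$- or $4$-to-$1$ on $\F$-points, and one must also correctly sum the Frobenius traces over the six cusps $t_3=\infty,\pm1,0,\pm\sqrt2$ (the three above $t_1=1$ being split or nonsplit multiplicative according to $p\bmod 4$, while $\pm\sqrt2$ are $\F$-rational only for $p\equiv\pm1\pmod 8$). Once these weights and cusp contributions are tracked correctly in each residue class $p\bmod 8$, the remainder is a direct transcription of the $X(2,8)$ argument.
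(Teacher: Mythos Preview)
Your proposal is correct and follows essentially the same route as the paper: identify $T_3$ with the image of the non-cuspidal $\F$-points of $X_1(8)$ under $g_3$, count preimages according to whether $t\in T_2$ or $t\in T_3\setminus T_2$, and for part b) combine the Lefschetz trace formula with $\mathrm{Trace}(Frob_p\mid W^3_{1,\ell})=b(p)$ and Proposition~\ref{prop:T2}b). Your explicit justification that the number of rational preimages of $t$ equals $\#E_t[2](\F)$ (hence $4$ on $T_2$ and $2$ on $T_3\setminus T_2$) is in fact more careful than the paper's proof, which simply writes down the resulting identity $p+1-c(8)=4\#T_2+2\#(T_3\setminus T_2)$ without spelling out this reason.
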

\begin{proof}
a) By definition, $T_3$ is equal to the image of $\F$-points (which are not cusps) on $X_1(8)$ under the natural map $g_3: X_1(8) \rightarrow X_1(4)$ of degree $4$. Since we have
$p+1-c(8)=4\sum_{t \in T_2}1+2 \sum_{t \in T_3}1,$ the claim follows.\\
b) From the definition of $T_3$ it follows that $$4\sum_{t \in T_2}P(t)+2\sum_{t \in T_3\setminus T_2} P(t) = \sum_{t \in g_2(X_1(8)(\F))\atop t \notin \textrm{cusps}}P(t),$$
hence $\sum_{t \in T_3}P(t)=\ds\frac{1}{2}\sum_{t \in g_2(X_1(8)(\F))\atop t \notin \textrm{cusps}}P(t)-\sum_{t \in T_2}P(t)$. Theorem \ref{thm:trace} implies that
\begin{eqnarray*}
Trace(Frob_p|W_{1,\ell}^3)&=&-\sum_{t \in \textrm{ cusps}}Trace(Frob_p|(i_*\mathcal{F})_t)-\sum_{t\notin  \textrm{cusps}}\left(p+1-P
(t)\right),\\
&=&-\sum_{t \in \textrm{cusps}}Trace(Frob_p|(i_*\mathcal{F})_t) -(p+1)(p+1-c(8))+\sum_{t \notin \textrm{cusps}}P(t),\\
\end{eqnarray*}
where the sums are over $X_1(8)(\F)$.
It follows from Proposition \ref{prop:forms}c) that $Tr(Frob_p|W_{1,\ell}^3)=b(p)$, and the claim follows. Note that Theorem \ref{thm:trace} implies
\[
		\sum_{t \in \textrm{cusps}} Tr(Frob_p|(i_*\mathcal{F})_t) =
			\begin{cases}
			6, \textrm{ if } p \equiv 1 \pmod{8},\\
			2, \textrm{ if } p \equiv 3 \pmod{8},\\
			4, \textrm{ if } p \equiv 5 \pmod{8},\\
			0, \textrm{ if } p \equiv 7 \pmod{8}.\\
			\end{cases}
\]
\end{proof}

\subsection{$X(2,4)$}  Universal elliptic curve $E^2$ over $X(2,4)$ has  $4$ singular fibers:  $t_2=0$ (the cusp above $t_1=1$) and $t_2=\infty$ which are split multiplicative if $p \equiv 1 \pmod{4}$ and nonsplit multiplicative otherwise, and split multiplicative $t_2=\pm 1$ (two cusps above $t_1=0$). Denote $\mathcal{F}= R^1 h_2'{_*{\Q_{\ell}}}$.

\begin{proposition}
\begin{itemize}
	\item [a)]
	\[
		\sum_{t \in T_1} 1 = \frac{p-3}{2},
	\]
	\item [b)]
	\[
		\sum_{t \in T_1} P(t) =
		\begin{cases}
		\frac{(p-1)^2}{2}, \textrm{ if } p \equiv 1 \pmod{4},\\
		\frac{p^2-2p-3}{2}, \textrm{ if } p \equiv 3 \pmod{4}.
		\end{cases}
	\]
	\item [c)]
	\[
		\ds\sum_{t \in T_1} P(t)^2 =	
		\begin{cases}
			\frac{p^3+1-d(p)}{2}, \textrm{ if } p \equiv 1 \pmod{4},\\
			\frac{p^3-8p-7-d(p)}{2}, \textrm{ if } p \equiv 3 \pmod{4}\\
		\end{cases}
	\]	
\end{itemize}
\end{proposition}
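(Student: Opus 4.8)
The plan is to compute the three sums over $T_1$ by applying the machinery of Theorem \ref{thm:trace} and Proposition \ref{prop:trag} to the universal elliptic curve $E^2$ over $X(2,4)$, exactly in parallel with the proof of Proposition \ref{prop:prva} for $X_1(4)$. The set $T_1$ consists of those $t \in \F^\times \setminus \{1\}$ for which $E_t[2](\F) \cong \Z/2\Z \times \Z/2\Z$, and this is precisely the image under the natural degree-$2$ map $g_2 \colon X(2,4) \to X_1(4)$ of the non-cuspidal $\F$-points of $X(2,4)$. Since $X(2,4)$ has genus zero with all four cusps $\Q$-rational (so $\#X(2,4)(\F) = p+1$ and $c(2,4)=4$), part a) follows immediately from a point count: $\sum_{t \in T_1} 1 = \tfrac{1}{2}\bigl((p+1) - 4\bigr) = \tfrac{p-3}{2}$, using that $g_2$ is two-to-one away from the cusps.

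For parts b) and c) I would feed the sheaves $\mathcal{F} = R^1 h_2'{_*}\Q_\ell$ and $\textrm{Sym}^2\mathcal{F}$ through the Lefschetz trace formula. Writing each $P(t) = \#E_t(\F)$ in terms of $p+1-P(t) = \textrm{Trace}(Frob_p \mid (i_*\mathcal{F})_t)$ at good fibers, part b) comes from
\begin{equation*}
\textrm{Trace}(Frob_p \mid W_{1,\ell}^2) = -\sum_{t \in \textrm{cusps}}\textrm{Trace}(Frob_p\mid(i_*\mathcal{F})_t) - (p+1)\bigl((p+1)-c(2,4)\bigr) + \sum_{t \notin \textrm{cusps}} P(t),
\end{equation*}
where $\sum_{t \notin \textrm{cusps}} P(t) = 2\sum_{t \in T_1} P(t)$ by the two-to-one property of $g_2$. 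Since $\dim S_3(\Gamma(2,4)) = \dim S_3(\Gamma_1(4)\cap\Gamma^0(2)) = 0$ by Proposition \ref{prop:forms}d), Proposition \ref{prop:trag} gives $\textrm{Trace}(Frob_p \mid W_{1,\ell}^2) = 0$, and solving for $\sum_{t \in T_1} P(t)$ yields b) once the cusp contributions are inserted. Part c) proceeds identically with $\textrm{Sym}^2\mathcal{F}$: expanding $(p+1-P(t))^2 - p$ at good fibers, using that the $\textrm{Sym}^2$ trace at every singular fiber is $1$ (by Theorem \ref{thm:trace}(3), all four fibers being multiplicative or potentially multiplicative), and invoking $S_4(\Gamma(2,4)) = \mathbb{C}\cdot f_4(\tau)$ from Proposition \ref{prop:forms}d) so that $\textrm{Trace}(Frob_p \mid W_{2,\ell}^2) = d(p)$. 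Substituting the already-computed values of $\sum_{t\in T_1} 1$ and $\sum_{t\in T_1} P(t)$ isolates $\sum_{t \in T_1} P(t)^2$ in terms of $p$ and $d(p)$.

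The only genuinely delicate bookkeeping is the cusp contribution $\sum_{\textrm{cusps}}\textrm{Trace}(Frob_p\mid(i_*\mathcal{F})_t)$ appearing in b), which depends on the reduction types and their splitting behaviour over $\F$. Here I would use the stated classification of the four singular fibers of $E^2$: the cusps $t_2 = \pm 1$ are split multiplicative, while $t_2 = 0, \infty$ are split multiplicative when $p \equiv 1 \pmod 4$ and nonsplit when $p \equiv 3 \pmod 4$, so the trace $1$ or $-1$ is read off from Theorem \ref{thm:trace}(3). This is where the two-case split on $p \bmod 4$ in the final formulas originates; for $\textrm{Sym}^2$ in part c) the subtlety disappears because every bad fiber contributes $1$ regardless of splitting, which is why c) has a uniform leading shape $\tfrac{1}{2}(p^3 + \cdots - d(p))$ with only the lower-order constant distinguishing the two residue classes. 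The main obstacle is therefore purely in correctly tabulating these signs and verifying that the constant terms match across the congruence conditions; the representation-theoretic input is entirely routine given the earlier setup.
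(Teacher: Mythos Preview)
Your proposal is correct and follows essentially the same argument as the paper: identifying $T_1$ with the image of non-cuspidal $\F$-points of $X(2,4)$ under the degree-$2$ map $g_2$, applying the Lefschetz trace formula to $\mathcal{F}$ and $\textrm{Sym}^2\mathcal{F}$, and using Proposition~\ref{prop:forms}d) to identify the traces as $0$ and $d(p)$ respectively. Your tabulation of the cusp contributions (giving $4$ or $0$ for $\mathcal{F}$ according to $p\bmod 4$, and uniformly $4$ for $\textrm{Sym}^2\mathcal{F}$) matches the paper exactly.
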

\begin{proof}
a) By definition, $T_1$ is equal to the image of $\F$-points (which are not cusps) on $X(2,4)_{\F}$ under the natural map $g_2: X(2,4) \rightarrow X_1(4)$ of degree $2$. Since we have
$p+1-c(2,4)=2\sum_{t \in T_1}1$ the claim follows.\\
b) We have $\ds\sum_{t\in T_1}P(t)=\frac{1}{2}\sum_{t\in g_1(X(2,4)(\F)) \atop t \notin \textrm{cusps}}P(t)$. Theorem \ref{thm:trace} implies
\begin{eqnarray*}
Trace(Frob_p|W_{1,\ell}^2)&=&-\sum_{t \textrm{ cusp}}Trace(Frob_p|(i_*\mathcal{F})_t)-\sum_{t\ne  \textrm{ cusp}}\left(p+1-P
(t)\right),\\
&=&-\sum_{t \textrm{ cusp}}Trace(Frob_p|(i_*\mathcal{F})_t) -(p+1)(p+1-c(2,4))+\sum_{t \ne \textrm{ cusp}}P(t).\\
\end{eqnarray*}
Since $\dim S_3(\Gamma_1(4)\cap \Gamma^0(2))=0$, it follows $Trace(Frob_p|W_{1,\ell}^2)=0$, and the claim follows.
 Note that we used
\[
		\sum_{t \in \textrm{cusps}} Trace(Frob_p|(i_*\mathcal{F})_t) =
			\begin{cases}
			4, \textrm{ if } p \equiv 1 \pmod{4},\\
			0, \textrm{ if } p \equiv 3 \pmod{4}.\\
			\end{cases}
\]\\
c) We have $\ds\sum_{t\in T_1}P(t)^2=\frac{1}{2}\sum_{t\in g_1(X(2,4)(\F)) \atop t \notin \textrm{cusps}}P(t)^2$. Theorem \ref{thm:trace} implies
\begin{eqnarray*}
Trace(Frob_p|W_{2,\ell}^2)&=&-\sum_{t \in \textrm{ cusps}}Trace(Frob_p|(i_*\textrm{Sym}^2\mathcal{F})_t)-\sum_{t\notin \textrm{cusps}}\left(\left(p+1-P
(t)\right)^2-p\right),\\
&=& -\sum_{t \notin \textrm{cusps}} P(t)^2+2(p+1)\sum_{t\notin \textrm{cusps}}P(t)-(p+1-c(2,4))(p^2+p+1)\\
&-&\sum_{t \in \textrm{cusps}}Trace(Frob_p|(i_*\textrm{Sym}^2\mathcal{F})_t).
\end{eqnarray*}

It follows from Proposition \ref{prop:forms}d) that $Trace(Frob_p|W_{2,\ell}^2)=d(p)$.   The claim follows.
  Note that we used
\[
		\sum_{t \in \textrm{cusps}} Trace(Frob_p|(i_*\textrm{Sym}^2\mathcal{F})_t) = 4.	
\]
\end{proof}

\subsection{$X(4,8)$}
For $t \in \F, t \ne 0,1$, denote by $E_t'$ the elliptic curve $E_t/\langle 2R \rangle$. The curve $E_t'$ is given by the equation $E_t': y^2 = (x-2t)(x+2t)(x-2t+4)$, and is isomorphic to the Legendre elliptic curve $\mathcal{E}_{1-t}$ where $\mathcal{E}_t: y^2 = x(x-1)(x-t)$.

\begin{proposition}\label{prop:no4}
\begin{itemize}
	\item [a)] If  $p \equiv 1 \pmod{4}$ and $t \in T_5$, then $\mathcal{E}_{1-t}$ has full $\F$-rational $4$-torsion, and the point $(1,0) \in \mathcal{E}_{1-t}$ is divisible by $4$.
	
	\item[b)] If $p$ is an odd prime and $t \in T_4$, then the point $(1,0) \in \mathcal{E}_{1-t}$ is divisible by $4$.
\item[c)] If $p\equiv 3 \pmod{4}$ then there are no elliptic curves over $\F$ with full $4$-torsion over $\F$.
\end{itemize}
\end{proposition}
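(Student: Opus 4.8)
The plan is to exploit the explicit $2$-isogeny $\psi\colon E_t\to E_t'$ with kernel $\langle 2R\rangle=\langle(0,0)\rangle$, which is defined over $\F$ because its kernel is $\F$-rational; in particular $\psi$ commutes with the Frobenius, and this is the feature that makes all the Galois bookkeeping go through. First I would record via V\'elu's formulas that on $E_t\colon V^2=U(U^2-2(t-2)U+t^2)$ the isogeny is $(U,V)\mapsto(V^2/U^2,\,V(U^2-t^2)/U^2)$ onto $E_t'$, and that it carries $R=(t,2t)$ to a point of order $2$: in the model $Y^2=X(X-4)(X-4(1-t))$ of $E_t'$ one finds $\psi(R)=(4,0)$, which under the scaling $X\mapsto X/4$ identifying $E_t'$ with $\mathcal{E}_{1-t}$ becomes exactly $(1,0)$. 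Thus $\psi(R)=(1,0)$, and every divisibility question about $(1,0)$ on $\mathcal{E}_{1-t}$ transports to a question about $R$, $Q$, $S$ on $E_t$.

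For part (c) I would argue by the Weil pairing. If some $E/\F$ satisfied $E[4]\subseteq E(\F)$, then the nondegenerate, Galois-equivariant pairing $e_4\colon E[4]\times E[4]\to\mu_4$ would force $\mu_4\subseteq\F^\times$, i.e. $4\mid p-1$, contradicting $p\equiv 3\pmod 4$. Hence no such curve exists.

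For part (b), let $t\in T_4$ and take $S$ with $4S=R$, $2S\in E_t(\F)$ and $\langle S\rangle$ $\F$-rational; then $S$ has order $16$ and $S^\sigma=mS$ for the Frobenius $\sigma$ generating $\mathrm{Gal}(\overline{\F}/\F)$ and some $m\in(\Z/16\Z)^\times$. The crucial step is that the clause $2S\in E_t(\F)$, together with $\mathrm{ord}(2S)=8$, gives $(m-1)(2S)=\mathcal{O}$, hence $m\equiv 1\pmod 8$; note that $4S=R$ being rational would only yield $m\equiv1\pmod4$, so the precise definition of $T_4$ is genuinely used here. Pushing forward by $\psi$, whose kernel is $\langle 8S\rangle=\langle 2R\rangle$, yields $\psi(S)\in E_t'$ of order $8$ with $\psi(S)^\sigma=m\,\psi(S)=\psi(S)$, so $\psi(S)\in E_t'(\F)=\mathcal{E}_{1-t}(\F)$; since $4\psi(S)=\psi(R)=(1,0)$, the point $(1,0)$ is divisible by $4$ over $\F$. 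I expect this Frobenius-multiplier computation to be the main obstacle to pin down exactly.

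For part (a) the divisibility of $(1,0)$ by $4$ is immediate from (b) since $T_5\subseteq T_4$, so it remains to prove full $\F$-rational $4$-torsion on $\mathcal{E}_{1-t}\colon y^2=x(x-1)(x-(1-t))$, whose roots are $0,1,1-t$. Here I would invoke the classical complete $2$-descent criterion: a $2$-torsion point $(e_i,0)$ lies in $2\,\mathcal{E}_{1-t}(\F)$ iff both differences $e_i-e_j$ are squares, and $E[4]\subseteq E(\F)$ as soon as two of the three $2$-torsion points are divisible by $2$. Using $p\equiv1\pmod4$ (so $-1$ is a square) this reduces the whole condition to ``$t$ and $1-t$ are both squares.'' Now $1-t=\square$ holds because $t\in T_5$ forces $E_t[2](\F)$ to be full, equivalently the discriminant $16(1-t)$ of the quadratic factor of $E_t$ is a square; and $t=\square$ holds because $t\in T_3$ produces $Q\in E_t(\F)$ with $2Q=R$, whence $\psi(Q)\in\mathcal{E}_{1-t}(\F)$ satisfies $2\psi(Q)=(1,0)$, so $(1,0)\in 2\,\mathcal{E}_{1-t}(\F)$, which by the descent criterion is precisely the assertion $t=\square$. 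Combining the two conditions gives full $4$-torsion and completes (a).
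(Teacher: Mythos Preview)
Your proof is correct. The core device---pushing everything through the $2$-isogeny $\psi\colon E_t\to E_t/\langle 2R\rangle\cong\mathcal{E}_{1-t}$ and tracking $R\mapsto(1,0)$---is exactly the paper's, and your treatment of part (b) is essentially the paper's one-line claim ``$S+\langle 2R\rangle$ is $\F$-rational of order $8$'' made explicit via the multiplier $m\equiv 1\pmod 8$.

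Where you genuinely diverge is in (a) and (c). For (c) the paper argues by the explicit $2$-descent criterion on a Weierstrass model: if $(a,0)$ and $(b,0)$ are both halved over $\F$ then $a-b$ and $b-a$ are both squares, forcing $-1\in\F^{\times2}$. Your Weil-pairing argument ($E[4]\subseteq E(\F)\Rightarrow\mu_4\subseteq\F^\times$) is shorter and more conceptual, and avoids any model-dependent computation. For the full $4$-torsion in (a), the paper stays on the $E_t$ side and invokes its Proposition~\ref{prop:square}\,a),\,c): since $p\equiv1\pmod4$, the extra $2$-torsion point $T\ne 2R$ has square $x$-coordinate, hence any $P$ with $2P=T$ satisfies $P^\sigma-P\in\{\mathcal{O},2R\}$, so $P+\langle 2R\rangle$ gives an $\F$-rational point of order $4$ independent of $\psi(S)$. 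You instead work directly on $\mathcal{E}_{1-t}$ with the classical $2$-descent criterion, reducing full $4$-torsion to ``$t$ and $1-t$ are squares,'' and then verify these: $1-t=\square$ from $E_t[2]$ full, and $t=\square$ from $(1,0)\in 2\mathcal{E}_{1-t}(\F)$ via $\psi(Q)$ with $2Q=R$. This route is self-contained (it does not appeal to Proposition~\ref{prop:square}) and incidentally shows the stronger fact that full $4$-torsion already holds for all $t\in T_2$ when $p\equiv1\pmod4$, not only for $t\in T_5$. The paper's route, on the other hand, keeps the argument uniform with the machinery already set up in Section~\ref{sec:counting}.
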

\begin{proof}
a) and b) Let $S\in E_t$ be such that $2S \in E_t(\F)$, $4S=R$ and $\langle S \rangle$ is $\F$-rational. Then $S + \langle 2R \rangle \in E_t/\langle 2R \rangle$ is $\F$-rational and has order $8$. The point $4S + \langle 2R \rangle$ maps to the point $(1,0)\in \mathcal{E}_{1-t}$ under $E_t/\langle 2R \rangle \cong  \mathcal{E}_{1-t}$.

If $p\equiv 1 \pmod{4}$, and $T\in E_t(\F)$ of order $2$, $T \ne 2R$, we have by Proposition \ref{prop:square}a) and c), that $x(T)$ is a square in $\F$ and that $P^\sigma-P \in \{\mathcal{O}, 2R\}$, for all $\sigma \in \textrm{Gal}(\overline{\F}/\F)$, where $2P=T$. It follows that $P + \langle 2R \rangle$ is $\F$-rational of order $4$.\\
\indent c) If $y^2=(x-a)(x-b)(x-c)$ is an elliptic curve over $\F$ with $a,b,c\in \F$, then it follows from the descent homomorphism that, for example, the point $(a,0)$ is divisible by $2$ over $\F$ if and only if $a-b$ and $a-c$ are squares in $\F$.  If both $(a,0)$ and $(b,0)$ are divisible by $2$ (i.e. if the elliptic curve has full $\F$-rational $4$-torsion), then both $a-b$ and $b-a$ are squares, hence $-1$ is a square in $\F$, and $p \equiv 1 \pmod{4}$.
\end{proof}

Modular curve $X(4,8)$ is a moduli space for (generalized) elliptic curves with (linearly independent) points of order $8$ and $4$ with the fixed value of Weil pairing.
We have a map $g:X(4,8) \rightarrow X(2)$, given by the $(E,Q,P) \mapsto (E, 2Q, 4P)$, where $Q$ and $P$ are points on $E$ of order $4$ and $8$ respectively. The degree of this map is $16$ (note that we identify $(E,Q,P)$ with $(E,-Q,-P)$ and take into account only those pairs $(Q,P)$ which satisfy the Weil pairing condition). Denote by $\tilde{g}:X(2,8) \mapsto X(2)$ the map given by $(E,Q,P)\mapsto (E,Q,4P)$.

\begin{proposition}\label{prop:zadnja}
\begin{itemize}
	\item [a)]
	\[
	\sum_{t \in T_5} 1 =
	\begin{cases}
	\frac{p-a(p)-15}{16}, \textrm{ if } p \equiv 1 \pmod{8},\\
	\frac{p-3}{8}, \textrm{ if } p \equiv 3 \pmod{8},\\
	\frac{p-a(p)-7}{16}, \textrm{ if } p \equiv 5 \pmod{8},\\
	\frac{p-7}{8}, \textrm{ if } p \equiv 7 \pmod{8}.\\
	\end{cases}
	\]
	\item [b)]
	\[
		\sum_{t \in T_4} 1 =
			\begin{cases}
			\frac{3p+a(p)-21}{16}, \textrm{ if } p \equiv 1 \pmod{8},\\
			\frac{p-3}{4}, \textrm{ if } p \equiv 3 \pmod{8},\\
			\frac{3p+a(p)-13}{16}, \textrm{ if } p \equiv 5 \pmod{8},\\
			\frac{p-7}{4}, \textrm{ if } p \equiv 7 \pmod{8}.\\
			\end{cases}
	\]
\end{itemize}
\end{proposition}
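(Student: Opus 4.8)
The plan is to count $\#T_5$ and $\#T_4$ by realizing each set as the (appropriately normalized) image of rational points on a modular curve whose point-count we can already control, exactly in the style of the earlier propositions. For part a), Proposition \ref{prop:no4}a) tells us that for $p\equiv1\pmod 4$ a value $t\in T_5$ is precisely a value for which $\mathcal{E}_{1-t}$ has full rational $4$-torsion \emph{and} the point $(1,0)$ is $4$-divisible over $\F$; together with the Weil-pairing normalization this is exactly the data parametrized by $X(4,8)$. So I would first set up the natural degree map $g:X(4,8)\to X(2)$ defined by $(E,Q,P)\mapsto(E,2Q,4P)$ (degree $16$, as noted, after identifying $(E,Q,P)$ with $(E,-Q,-P)$) and observe that $T_5$ is the image of the non-cusp $\F$-points of $X(4,8)$ under the composite $X(4,8)\to X(2)\cong X_1(4)$. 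Counting fibers then gives, for $p\equiv1\pmod 8$ and $p\equiv5\pmod 8$,
\[
\#T_5=\frac{\#X(4,8)(\F)-c(4,8)}{16},
\]
and since Section \ref{sec:universal} identifies $X(4,8)_{\F}$ with the conductor-$32$ curve $C:y^2=x^3-x$ via the newform $f_1$, we have $\#X(4,8)(\F)=p+1-a(p)$. Plugging in the values $c(4,8)=16$ (resp.\ $8$) from Subsection \ref{sec:curves} yields the stated formulas $\frac{p-a(p)-15}{16}$ and $\frac{p-a(p)-7}{16}$. For $p\equiv3\pmod 4$, Proposition \ref{prop:no4}c) rules out full rational $4$-torsion, so by the Remark $T_5=T_2$ and I simply quote Proposition \ref{prop:T2}a), giving $\frac{p-3}{8}$ and $\frac{p-7}{8}$.

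For part b), the key observation (Proposition \ref{prop:no4}b)) is that $t\in T_4$ imposes only the $4$-divisibility of $(1,0)$ on $\mathcal{E}_{1-t}$, without demanding full rational $2$-torsion. The natural parameter space is then $X(2,8)$ rather than $X(4,8)$: I would use the auxiliary map $\tilde g:X(2,8)\to X(2)$, $(E,Q,P)\mapsto(E,Q,4P)$, and relate $T_4$ to the non-cusp $\F$-points of $X(2,8)$ lying over the relevant part of $X(2)$. Since $T_5\subseteq T_4$ and $T_5$ carries the extra full-$2$-torsion condition, the cleanest route is to write $T_4$ as a disjoint-union count: the contribution from $X(2,8)$ (which sees $4$-divisibility of $(1,0)$ but allows either $\Z/2\Z$ or $\Z/2\Z\times\Z/2\Z$ reduction) split according to whether the extra $2$-torsion point is rational. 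Concretely I expect a relation of the shape
\[
p+1-c(2,8)=\alpha\,\#T_4+\beta\,\#T_5
\]
coming from counting $\F$-fibers of $X(2,8)\to X_1(4)$ over $T_4$-values, with the coefficients $\alpha,\beta$ recording how many rational $8$-torsion points (up to sign) sit above a given $t$ in the two torsion-structure cases; solving this for $\#T_4$ using the part-a) value of $\#T_5$ and $c(2,8)$ from Subsection \ref{sec:curves} gives the four stated formulas.

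The main obstacle is the careful fiber-counting bookkeeping: determining the exact degrees $\alpha,\beta$ (equivalently, how many non-cusp $\F$-points of $X(2,8)$ map to a single $t\in T_4$), and handling the split between $p\equiv1\pmod 4$ and $p\equiv3\pmod 4$ correctly. In the $p\equiv3\pmod 4$ case the full-$4$-torsion obstruction forces the $X(4,8)$ contribution to vanish, which is why the formulas collapse to the cleaner $\frac{p-3}{4}$ and $\frac{p-7}{4}$; I would verify this is consistent with $T_2=T_5$ and with the degree-$8$ map $g_4:X(2,8)\to X_1(4)$ underlying Proposition \ref{prop:T2}. The identities should be double-checked against the formula $\#X(2,8)(\F)=p+1$ and the residue-dependent values of $c(2,8)$ and $a(p)$, and cross-validated numerically in Sage for small primes in each residue class modulo $8$.
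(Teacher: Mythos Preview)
Your proposal is correct and follows essentially the same approach as the paper. For part a) the paper does exactly what you describe, obtaining $p+1-a(p)-c(4,8)=16\,\#T_5$ for $p\equiv 1\pmod 4$ and reducing to $T_5=T_2$ for $p\equiv 3\pmod 4$; for part b) the paper's fiber count gives precisely your anticipated relation with $\alpha=\beta=4$ (from $8\,\#T_5+4(\#T_4-\#T_5)=p+1-c(2,8)$) when $p\equiv 1\pmod 4$, and $4\,\#T_4=p+1-c(2,8)$ when $p\equiv 3\pmod 4$.
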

\begin{proof}
a) Let $p \equiv 1 \pmod{4}$. It follows from Proposition \ref{prop:square} that $\#T_5$ is equal to the number of $t's$ for which the points $(1,0), (0,0) \in \mathcal{E}_t$ are divisible by $4$ and $2$ (in $\mathcal{E}_t(\F)$) respectively, which in turn, by Proposition \ref{prop:square} is equal to the number of the point in the image of $\F$-point of $X(4,8)$ under the map $g$, i.e. $\#T_4=\#g(X(4,8)(\F))$.

Note that $f_1(\tau)$ is the modular form that corresponds under the modularity theorem to the elliptic curve $X(4,8)$, hence $\#X(4,8)(\F) = p+1-a(p)$. Also, if one point in the preimage of $g$ is $\F$-rational, then the same holds for all the points in the preimage, so we have that $p+1-a(p)-c(4,8)=16\sum_{t \in T_5}1$, and the claim follows.\\
If $p\equiv 3 \pmod{4}$ then $\#T_5=\#T_2$ (by Proposition \ref{prop:square} a)), and the claim follows from Proposition $\ref{prop:T2}$. \\
b) Similarly as in part a), it follows from Proposition \ref{prop:square} that $\#T_4$ is equal to the number of elements in the image of $\F$-rational points of $X(2,8)$ under the map $\tilde{g}$.

Let $p \equiv 1 \pmod{4}$.  There are $\#T_5$ points in the image of $\tilde{g}$ which are also in the image of $g$, hence each of these points have eight $\F$-rational points in the preimage by the map $\tilde{g}$, while the remaining $\#T_4-\#T_5$ points have four $\F$-rational points in the preimage. Hence $8\#T_5 + 4\#(T_4-\#T_5)=p+1-c(2,8)$, and the claim follows.

If $p\equiv 3 \pmod{4}$ then by Proposition \ref{prop:no4} there are no elliptic curves over $\F$ with full $4$-torsion over $\F$, hence $4 \#T_4=p+1-c(2,8)$, and the claim follows.
\end{proof}
Next we prove that if $p\equiv 3 \pmod{4}$ then $\#T_0=\frac{1}{2}\#T_1$ and $\sum_{t \in T_0}P(t)=\frac{1}{2}\sum_{t \in T_1}P(t)$. By definition, $t \in T_1$ implies that $1-t=u^2$ for some $u \in \F$. Denote by $t'=1-\left(\frac{1}{u}\right)^2$. It follows that $t'\in T_1$ and $(t')'=t$. Moreover, only one of $t=1-u^2$ and $t'=\frac{u^2-1}{u^2}$ is a square, hence precisely one of them is an element of $T_0$. It follows that $\#T_0=\frac{1}{2}\#T_1$. The second equality now follows from the fact that $P(t)=P(t')$ (it is easy to check that $E_t$ and $E_{t'}$ have the same $j$-invariants).

Theorem \ref{thm:main} now follows from Proposition \ref{prop:main}, Proposition \ref{prop:W1}, Propositions \ref{prop:prva}-\ref{prop:zadnja} and the previous discussion.

\section{Diophantine $m$-tuples in $\F$ and character sums}\label{sec:gen}

In this section, we will use properties of character sums (sums of the Legendre symbols)
to show that for arbitrary $m\geq 2$ there exist Diophantine $m$-tuples in $\F$ for sufficiently
large $p$. We will also derive formulas for the number of Diophantine pairs and triples in $\F$.

\begin{theorem} \label{tm:mtuples}
Let $m\geq 2$ be an integer. If $p > 2^{2m-2} m^2$ is a prime, then there exists a Diophantine $m$-tuple
in $\F$.
\end{theorem}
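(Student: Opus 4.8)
The plan is to count Diophantine $m$-tuples via an inclusion-exclusion over character sums and then show this count is positive for large $p$. First I would introduce the quadratic character $\chi(x)=\g{x}$ and note that for a nonzero product $u$ of distinct pair-elements, the indicator that $u+1$ is a nonzero square equals $\tfrac12\big(1+\chi(u+1)\big)$ minus a small correction when $u+1=0$. Thus for an ordered $m$-tuple $(a_1,\dots,a_m)$ of nonzero, distinct elements, the number of pairs $\{i,j\}$ is $\binom m2$, and the Diophantine condition is $a_ia_j+1$ being a (possibly zero) square for all such pairs. Writing the indicator of ``$a_ia_j+1$ is a square'' as $\tfrac12\big(1+\chi(a_ia_j+1)\big)$ up to the boundary cases where $a_ia_j+1=0$, I would express the number of ordered Diophantine $m$-tuples as
\[
\frac{1}{m!}\sum_{\substack{a_1,\dots,a_m\in\Ff\\ \text{distinct}}}\ \prod_{1\le i<j\le m}\frac{1+\chi(a_ia_j+1)}{2},
\]
again modulo controlled errors from the vanishing and coincidence cases.

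Second, I would expand the product $\prod_{i<j}\tfrac12(1+\chi(a_ia_j+1))$ into $2^{-\binom m2}$ times a sum over subsets $S$ of the edge set of the complete graph $K_m$, where the term for $S$ is $\prod_{\{i,j\}\in S}\chi(a_ia_j+1)$. The empty set contributes the expected main term $2^{-\binom m2}p^m/m!$ (after correcting for the distinctness and nonzero constraints, which only perturb things by $O(p^{m-1})$). For every nonempty $S$, summing $\prod_{\{i,j\}\in S}\chi(a_ia_j+1)$ over all $a_1,\dots,a_m$ is a multivariate character sum, and I expect each such sum to be bounded by a power of $p$ strictly smaller than $p^m$. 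The key tool here is Weil's bound on character sums (as signalled by the reference to Weil's estimates in the excerpt's roadmap): treating the sum one variable at a time, each variable $a_i$ that appears in at least one chosen edge contributes a factor roughly $\sqrt p$ rather than $p$, because $\prod_{j}\chi(a_ia_j+1)$, viewed as a function of $a_i$, is a Legendre symbol of a nonconstant polynomial of degree equal to the number of edges at $i$ in $S$.

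Third, I would assemble the estimate. The total error is $2^{-\binom m2}$ times the sum over nonempty $S$ of the bounds just described, plus the $O(p^{m-1})$ from the combinatorial corrections. Bounding the number of subsets $S$ by $2^{\binom m2}$ and each nonempty character sum by $(m-1)\,p^{m-1}\sqrt p$ (a typical Weil-type bound, with the degree factor $m-1$ from the maximal vertex degree and $p^{m-1}$ absorbing the free variables) gives a total error of order $m^2\,p^{m-1/2}$ up to absolute constants. Requiring the main term $2^{-\binom m2}p^m/m!$ to dominate this error, and being careful to track the explicit constants so the final threshold comes out as $p>2^{2m-2}m^2$, then forces $N^{(m)}(p)>0$.

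The main obstacle, and the step requiring the most care, is the bookkeeping of constants in the Weil estimate so that the clean threshold $2^{2m-2}m^2$ emerges rather than a messier bound. In particular I expect the delicate point to be the per-variable application of Weil's theorem: one must verify that after summing over variables not incident to $S$, the remaining polynomial whose Legendre symbol is summed is genuinely not a constant times a perfect square, so that the square-root cancellation applies. Handling degenerate configurations of $S$ (for instance when $S$ forms a matching, or when the polynomial in some variable could accidentally become a square) is where the argument is most likely to need a separate case analysis, and getting a uniform bound across all nonempty $S$ that still yields the stated explicit constant is the crux.
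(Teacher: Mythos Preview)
Your approach is the one the paper uses to prove the asymptotic formula $N^{(m)}(p)=\frac{1}{2^{\binom m2}}\frac{p^m}{m!}+o(p^m)$, but it cannot yield the explicit threshold $p>2^{2m-2}m^2$ stated in the theorem. The obstruction is not a matter of careful bookkeeping: in your expansion the main term is $\frac{p^m}{2^{\binom m2}m!}$, while after bounding each of the $2^{\binom m2}-1$ nonempty-$S$ character sums by $O\big((m-2)p^{m-1/2}\big)$ via Weil, positivity of $N^{(m)}(p)$ requires roughly $\sqrt{p}\gtrsim 2^{\binom m2}(m-2)$, i.e.\ $p\gtrsim 2^{m(m-1)}m^2$. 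The exponent is quadratic in $m$, not linear, and this is intrinsic to any argument whose leading term carries the factor $2^{-\binom m2}$. So the ``delicate constant tracking'' you flag is in fact an impossibility.

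The paper instead proceeds by induction on $m$: given a Diophantine $m$-tuple $\{a_1,\dots,a_m\}$, it counts the $x\in\F$ with $\g{a_ix+1}=1$ for all $i$ using a single-variable character-sum estimate (the relevant bound is $g\ge p/2^m-\big(\tfrac{m-2}{2}+2^{-m}\big)\sqrt p-\tfrac m2$, from \cite[Exercise~5.64]{LN}), and shows $g>m+1$ once $p>2^{2m}(m+1)^2$. Because each inductive step imposes only $m$ simultaneous square conditions on \emph{one} new variable, the main term is $p/2^m$ rather than $p^m/2^{\binom m2}$, and this is precisely what produces the linear exponent $2m-2$ in the final bound. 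Your global count and the paper's greedy extension both rest on Weil, but only the latter gives the stated constant.
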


\begin{proof}
We prove the theorem by induction on $m$. For $m = 2$ and $p > 16$ (in fact, for $p\geq 5$),
we may take the Diophantine pair $\{1,3\}$ in $\F$.

Let $m\ge 2$ be an integer such that the statement holds. Take a prime $p > 2^{2m} (m+1)^2$.
Since $p > 2^{2m-2} m^2$, there exist a Diophantine $m$-tuple
$\{a_1,\ldots,a_m\}$ in $\F$. Let
$$ g:= \# \{x\in \F \,:\, \left(\frac{a_ix+1}{p}\right)=1,\, \mbox{for $i=1,\ldots,m$} \} $$
and denote by $\bar{a_i}$ the multiplicative inverse of $a_i$ in $\F$. Then, by \cite[Exercise 5.64]{LN},
we have
\begin{eqnarray*}
g &\!\!=\!\!& \# \{x\in \F \,:\, \left(\frac{x+\bar{a_i}}{p}\right)=\left(\frac{\bar{a_i}}{p}\right),\, \mbox{for $i=1,\ldots,m$} \} \\
&\!\! \geq \!\!& \frac{p}{2^m} - \left( \frac{m-2}{2} + \frac{1}{2^m}\right) \sqrt{p} - \frac{m}{2}.
\end{eqnarray*}
Since,
\begin{eqnarray*}
\left( \frac{m-2}{2} + \frac{1}{2^m}\right) \sqrt{p} + \frac{m}{2} + (m+1) \leq
\sqrt{p} \left( \frac{m}{2} - 1 + \frac{1}{2^m} + \frac{3}{2^{m+1}} \right)
< \frac{m}{2} \sqrt{p} < \frac{p}{2^m},
\end{eqnarray*}
we get that $g > m+1$. Thus, we conclude that there exist $x\in \F$, $x\not\in \{0,a_1,a_2,\ldots,a_m\}$,
such that $\left(\frac{a_ix+1}{p}\right)=1$ for $i=1,\ldots,m$. Hence,
$\{a_1,\ldots,a_m,x\}$ is a Diophantine $(m+1)$-tuple in $\F$.
\end{proof}

In the proof of the next two propositions we will several times use the following well-known fact
(see e.g. \cite[Section 7.8]{Hua}):
$$ \sum_{x\in \F} \left(\frac{\alpha x^2 + \beta x + \gamma}{p} \right) = -\left(\frac{\alpha}{p}\right), $$
provided $\beta^2 - 4\alpha \gamma \not\equiv 0 \pmod{p}$.

\begin{proposition} \label{prop:pairs}
Let $p$ be an odd prime. The number of Diophantine pairs in $\F$ is equal to
\[
N^{(2)}(p) =
			\begin{cases}
			\frac{(p-1)(p-2)}{4}, \textrm{ if } p \equiv 1 \pmod{4},\\
			\frac{p^2-3p+4}{4}, \textrm{ if } p \equiv 3 \pmod{4}.\\
			\end{cases}
\]
\end{proposition}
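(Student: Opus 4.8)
The plan is to count the $N^{(2)}(p)$ unordered Diophantine pairs by first counting ordered pairs $(a,b)$ with $a,b \in \Ff$ and halving at the end. For $ab+1 \ne 0$ I will detect whether $ab+1$ is a (nonzero) square via the indicator $\tfrac12\left(1 + \g{ab+1}\right)$, while the single exceptional value $ab+1 = 0$ still counts as a square (namely $0$) and must be added back by hand. Accordingly I first compute
\[
M := \#\{(a,b) \in \Ff \times \Ff : ab + 1 \text{ is a square}\}
= \tfrac12(p-1)^2 + \tfrac12\sum_{a,b \ne 0}\g{ab+1} + \tfrac12\,\#\{(a,b): ab+1 = 0\},
\]
where the last count is $p-1$, since for each $a \ne 0$ there is exactly one $b = -a^{-1}$.

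Next I would evaluate the double sum $\sum_{a,b \ne 0}\g{ab+1}$ by summing over $b$ first: for fixed $a \ne 0$ the map $b \mapsto ab+1$ is a bijection of $\F$, so $\sum_{b \in \F}\g{ab+1} = \sum_{c \in \F}\g{c} = 0$, and deleting $b = 0$ leaves $-\g{1} = -1$; summing over the $p-1$ values of $a$ gives $-(p-1)$. Substituting this back, the two correction terms cancel and I obtain the clean value $M = \tfrac12(p-1)^2$, independent of $p \bmod 4$.

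The congruence dependence enters only through the diagonal $a = b$, which I must excise to pass from $M$ to ordered pairs with $a \ne b$. This requires $A := \#\{a \in \Ff : a^2 + 1 \text{ is a square}\}$. Here I would invoke the quoted identity $\sum_{x \in \F}\g{\alpha x^2 + \beta x + \gamma} = -\g{\alpha}$ with $(\alpha,\beta,\gamma) = (1,0,1)$ (legitimate since $\beta^2 - 4\alpha\gamma = -4 \ne 0$) to get $\sum_{a \in \F}\g{a^2+1} = -1$, then remove the $a=0$ term and add back the contribution of the locus $a^2+1 = 0$; the latter has two solutions when $-1$ is a square, i.e. $p \equiv 1 \pmod 4$, and none otherwise. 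This yields $A = (p-1)/2$ for $p \equiv 1 \pmod 4$ and $A = (p-3)/2$ for $p \equiv 3 \pmod 4$. Finally $N^{(2)}(p) = \tfrac12(M - A)$, and separating the two residue classes reproduces the two stated formulas.

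Every computation here is a routine application of the vanishing of $\sum_{c}\g{c}$ together with the quoted quadratic character sum. The only genuinely delicate point — and the place where a sign error or an off-by-one slip would corrupt the final answer — is the careful bookkeeping of the exceptional loci: the excluded value $b = 0$, the zero set $ab+1 = 0$, the diagonal $a = b$, and the two roots of $a^2+1$, all under the standing convention that $0$ counts as a square in $\F$.
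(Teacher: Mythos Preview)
Your proof is correct and follows essentially the same character-sum approach as the paper: both use the indicator $\tfrac12\bigl(1+\g{\cdot}\bigr)$, correct for the locus $ab+1=0$, and invoke the identity $\sum_{x}\g{x^2+1}=-1$. The only organizational difference is that the paper sums directly over $a\neq b$ using the modified symbol $\g{\cdot}'$, whereas you first compute $M$ over all of $\Ff\times\Ff$ and then excise the diagonal via $A$; this is a cosmetic reordering, not a different method.
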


\begin{proof}
We have
$$ 4N^{(2)}(p) = \sum_{a,b\neq 0, a\neq b} \left( 1+ \left(\frac{ab+1}{p}\right)'\right), $$
where $\left(\frac{x}{p}\right)'=\left(\frac{x}{p}\right)$ for $x\neq 0$ and
$\left(\frac{0}{p}\right)'=1$.
Therefore, we have
\begin{eqnarray*}
4N^{(2)}(p) &=& \sum_{b\neq 0} \sum_{a\neq 0,b} 1 +
\sum_{b\neq 0} \sum_{a\neq 0,b} \left(\frac{ab+1}{p}\right) +
\sum_{b\neq 0, b^2\neq -1} 1 \\
&=& (p-1)(p-2) + \sum_{b\neq 0} \left (-1 - \left(\frac{b^2+1}{p}\right) \right) + \sum_{b\neq 0, b^2\neq -1} 1.
\end{eqnarray*}
If $p\equiv 1\pmod{4}$, the last sum is equal to $p-3$. Thus we get
$$  4N^{(2)}(p) = (p-1)(p-2) -(p-1)+2+(p-3)=(p-1)(p-2). $$
Similarly, for $p\equiv 3\pmod{4}$, we get
$$  4N^{(2)}(p) = (p-1)(p-2) -(p-1)+2+(p-1)=p^2-3p+4. $$
\end{proof}

\begin{proposition} \label{prop:triples}
Let $p$ be an odd prime. The number of Diophantine triples in $\F$ is equal to
\[
N^{(3)}(p) =
			\begin{cases}
			\frac{(p-1)(p-3)(p-5)}{48}, \textrm{ if } p \equiv 1 \pmod{4},\\
			\frac{(p-3)(p^2-6p+17)}{48}, \textrm{ if } p \equiv 3 \pmod{4}.\\
			\end{cases}
\]
\end{proposition}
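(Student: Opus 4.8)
The plan is to imitate the character-sum computation behind Proposition \ref{prop:pairs}, now enforcing all three pairwise conditions simultaneously. Using the modified symbol $\g{x}'$ (with $\g{0}'=1$) so that $\tfrac12\bigl(1+\g{x}'\bigr)$ is the indicator that $x$ is a square, and counting ordered triples (each unordered Diophantine triple arising $3!=6$ times), I would start from
\[
48\,N^{(3)}(p)=\sum_{\substack{a,b,c\in\Ff\\ \text{distinct}}}\Bigl(1+\g{ab+1}'\Bigr)\Bigl(1+\g{ac+1}'\Bigr)\Bigl(1+\g{bc+1}'\Bigr).
\]
Expanding the product into eight terms and using the $S_3$-symmetry of the index set, I would group them as $S_0+3S_1+3S_2+S_3$, where $S_0=(p-1)(p-2)(p-3)$ counts ordered distinct nonzero triples and $S_1,S_2,S_3$ are the sums over the same index set of the one-, two- and three-factor products $\g{ab+1}'$, $\g{ab+1}'\g{ac+1}'$ and $\g{ab+1}'\g{ac+1}'\g{bc+1}'$.

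For $S_1$ the third variable is free, so $S_1=(p-3)\sum_{a,b\ne0,\,a\ne b}\g{ab+1}'$; splitting $\g{x}'$ into the genuine symbol $\g{x}$ plus $\delta(x)$ (the indicator of $x=0$) and applying the identity $\sum_x\g{\alpha x^2+\beta x+\gamma}=-\g{\alpha}$ recalled above, the Legendre part is elementary while the vanishing locus $ab=-1$ contributes $p-3$ or $p-1$ admissible pairs according as $-1$ is or is not a square. This gives $S_1=0$ for $p\equiv1$ and $S_1=2(p-3)$ for $p\equiv3\pmod4$. For $S_2$ the two factors share the index $a$; fixing $a$ and writing $\sum_{b\ne c}=(\sum_b)^2-\sum_{b=c}$, I would use the identity $\bigl(\g{x}'\bigr)^2=1$ (valid for every $x$, including $x=0$) together with the collapse $\sum_{b\ne0,a}\g{ab+1}'=-\g{a^2+1}'$, which follows from $\sum_{b}\g{ab+1}'=1$. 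Since then $\bigl(\g{a^2+1}'\bigr)^2=1$, a short computation yields $S_2=-(p-1)(p-3)$, with no dependence on $p\bmod4$.

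The essential difficulty is $S_3$, a genuine three-variable character sum. Here I would again write $\g{x}'=\g{x}+\delta(x)$: the terms carrying two or three factors $\delta$ vanish, since $ab=ac=-1$ forces $b=c$, leaving $S_3=T+3T'$ with $T=\sum\g{ab+1}\g{ac+1}\g{bc+1}$ and $T'=\sum\delta(ab+1)\g{ac+1}\g{bc+1}$. In $T'$ the constraint $b=-1/a$ turns the $c$-sum into one quadratic character sum, whose discriminant is $(a^2+1)^2$ and hence nonzero exactly when $a^2\ne-1$; summing the resulting constant over the admissible $a$ gives $T'=-2(p-3)$ for $p\equiv1$ and $T'=0$ for $p\equiv3$. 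For $T$ I would fix $a,b$ and sum over $c$: the leading contribution is $\sum_c\g{(ac+1)(bc+1)}=-\g{ab}$, the discriminant being $(a-b)^2\ne0$, and the excluded values $c\in\{0,a,b\}$ produce correction terms. The remaining two-variable sums---$\sum\g{ab(ab+1)}$ (handled by grouping according to the value $w=ab$, for which there are $p-2-\g{w}$ admissible pairs), $\sum\g{a^2+1}$, and the $ab=-1$ locus---are each evaluated by the same quadratic identity and collapse to $T=6(p-3)$ for every odd $p$. Hence $S_3=0$ for $p\equiv1$ and $S_3=6(p-3)$ for $p\equiv3\pmod4$.

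Substituting into $48\,N^{(3)}(p)=S_0+3S_1+3S_2+S_3$ and factoring gives $(p-1)(p-3)(p-5)$ when $p\equiv1$ and $(p-3)(p^2-6p+17)$ when $p\equiv3\pmod4$, as claimed. Note that the entire $p\bmod4$ dichotomy is driven by the single square/non-square alternative for $-1$, entering only through the vanishing-locus corrections in $S_1$ and $T'$, whereas the genuinely hard sum $T$ is insensitive to it. I expect the main obstacle to be the bookkeeping inside $S_3$---tracking the diagonal and excluded terms and the $\delta$-corrections so that nothing is double-counted---since every individual sum reduces to a direct application of the stated quadratic character-sum formula.
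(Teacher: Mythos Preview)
Your argument is correct and follows the same character-sum strategy as the paper: expand the triple product, exploit the $S_3$-symmetry, and reduce every piece to the quadratic identity $\sum_x\g{\alpha x^2+\beta x+\gamma}=-\g{\alpha}$. The bookkeeping differs slightly. The paper replaces each $\g{\cdot}'$ by the genuine Legendre symbol at the outset and carries the vanishing-locus corrections as three extra sums $S_5,S_6,S_7$ over the restricted pair set $\{(a,b):a^2\ne-1,\ b\ne a,-a^{-1}\}$, for a total of seven sums; you instead keep the primed symbol as long as possible and only split $\g{x}'=\g{x}+\delta(x)$ inside the three-factor term. Your treatment of the two-factor sum is the cleaner one: by using $\bigl(\g{a^2+1}'\bigr)^2=1$ you get $S_2=-(p-1)(p-3)$ uniformly, whereas the paper's unprimed analogue $S_3$ already splits on $p\bmod 4$ and the split is only cancelled later by $S_5,S_6,S_7$. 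The evaluation of the genuinely three-variable sum $T$ (the paper's $S_4$) is identical in both approaches and yields $6(p-3)$ independently of $p\bmod 4$, exactly as you note.
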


\begin{proof}
We have
$$ 48N^{(3)}(p) = \sum_{S} \left( 1+ \left(\frac{ab+1}{p}\right)'\right)\left( 1+ \left(\frac{ac+1}{p}\right)'\right)\left( 1+ \left(\frac{bc+1}{p}\right)'\right), $$
where the sum is taken over all triples $a,b,c$ in $\F$ such that
$a,b,c\neq 0$, $a\neq b$, $a\neq c$, $b\neq c$.
Let us denote:
\begin{eqnarray*}
S_1 &=& \sum_{S} 1, \\
S_2 &=& \sum_{S} \left(\frac{ab+1}{p}\right), \\
S_3 &=& \sum_{S} \left(\frac{ab+1}{p}\right)\left(\frac{ac+1}{p}\right), \\
S_4 &=& \sum_{S} \left(\frac{ab+1}{p}\right)\left(\frac{ac+1}{p}\right)\left(\frac{bc+1}{p}\right), \\
S_5 &=& \sum_{S'} 1, \\
S_6 &=& \sum_{S'} \left(\frac{ab+1}{p}\right), \\
S_7 &=& \sum_{S'} \left(\frac{ab+1}{p}\right)\left(\frac{-a^{-1}b+1}{p}\right),
\end{eqnarray*}
where the sums $S_5,S_6,S_7$ are taken over all pairs $a,b$ in $\F$ such that
$a,b\neq 0$, $b\neq a,-a^{-1}$, $a^2\neq -1$.
Then we have
\begin{equation} \label{eq:N3}
48N^{(3)}(p) = S_1 + 3S_2 + 3S_3 + S_4 + 3S_5 + 6S_6 + 3S_7.
\end{equation}
Thus, it remains to compute the sums $S_1,\ldots,S_7$. We will derive the formulas for the
cases $p\equiv 1,3\pmod{4}$. When the formulas for these two cases differ,
the upper sign will correspond to $p\equiv 1\pmod{4}$, while the lower sign will correspond
to $p\equiv 3\pmod{4}$.

We have
$$ S_1= (p-1)(p-2)(p-3), $$
\begin{eqnarray*}
S_2 = (p-3)\sum_{\substack{a,b\neq 0\\a\neq b}} \left(\frac{ab+1}{p}\right) = -(p-3)^2,
\end{eqnarray*}
\begin{eqnarray*}
S_3 &=& \sum_{\substack{a,b\neq 0\\a\neq b}} \left(\frac{ab+1}{p}\right) \sum_{c\neq 0,a,b} \left(\frac{ac+1}{p}\right) \\
&=& \sum_{\substack{a,b\neq 0\\a\neq b}} \left (-1-\left(\frac{ab+1}{p}\right) -\left(\frac{a^2+1}{p}\right) \right) \\
&=& (p-3) - \sum_{\substack{a,b\neq 0\\a\neq b\\ ab+1\neq 0}} 1 - \sum_{\substack{a,b\neq 0\\a\neq b}}
\left(\frac{ab+1}{p}\right) \left(\frac{a^2+1}{p}\right) \\
&=& (p-3) - (p^2-4p+4 \pm 1) - \sum_{a\neq 0} \left(\frac{a^2+1}{p}\right)
\left(-1-\left(\frac{a^2+1}{p}\right)\right) \\
&=&
(p-3) - (p^2-4p+4 \pm 1) - 2 + (p-2\mp 1) = -p^2 + 6p -11 \mp 2,
\end{eqnarray*}

\begin{eqnarray*}
S_4 &=& \sum_{\substack{a,b\neq 0\\a\neq b}} \left(\frac{ab+1}{p}\right)\sum_{c\neq 0,a,b}
\left(\frac{ac+1}{p}\right) \left(\frac{bc+1}{p}\right) \\
&=& \sum_{\substack{a,b\neq 0\\a\neq b}} \left(\frac{ab+1}{p}\right) \left(-\left(\frac{ab}{p}\right)-1-
\left(\frac{a^2+1}{p}\right)\left(\frac{ab+1}{p}\right)-
\left(\frac{b^2+1}{p}\right)\left(\frac{ab+1}{p}\right)\right) \\
&=& -\sum_{a\neq 0}\sum_{\substack{t\neq 0, a^2}} \left(\frac{t+1}{p}\right)\left(\frac{t}{p}\right) +
\sum_{a\neq 0} \left(1 + \left(\frac{a^2+1}{p}\right)\right) -
2\sum_{\substack{a,b\neq 0\\a\neq b\\ ab+1\neq 0}} \left(\frac{a^2+1}{p}\right) \\
&=& -\sum_{a\neq 0} \left(-1 - \left(\frac{a^2+1}{p}\right)\right) +
\sum_{a\neq 0} \left(1+\left(\frac{a^2+1}{p}\right)\right) \\
&& \mbox{}-
2\sum_{a\neq 0} \left(-1-1-\left(\frac{a^2+1}{p}\right)-\left(\frac{a^{-2}+1}{p}\right)\right) \\
&=& (4p-12)+2((p-1)-2)= 6p-18,
\end{eqnarray*}
$$ S_5 = p^2-5p+6 \mp (p-3), $$
\begin{eqnarray*}
S_6 = \sum_{\substack{a\neq 0\\a^2\neq -1}} \left(-1-\left(\frac{a^2+1}{p}\right)\right) = -p+4 \pm 1,
\end{eqnarray*}
\begin{eqnarray*}
S_7 &=& \sum_{\substack{a\neq 0\\a^2\neq -1}} \sum_{c\neq 0,a,-a^{-1}}
\left(\frac{ac+1}{p}\right) \left(\frac{-a^{-1}c+1}{p}\right) \\
&=& \sum_{\substack{a\neq 0\\a^2\neq -1}} (\mp 1 -1) = -p +3 \mp (p-3).
\end{eqnarray*}
Putting all these formulas together in (\ref{eq:N3}), we get
$$ 48N^{(3)}(p)= p^3 - 9p^2 + 29p - 33 \mp (6p-18), $$
and by writing separately the cases $p\equiv 1,3 \pmod{4}$,
we obtain the formula for $N^{(3)}(p)$ given in the statement of the proposition.
\end{proof}

For small values for $m$, the bound from Theorem \ref{tm:mtuples} can be improved
by using concrete examples of integer Diophantine $m$-tuples for $m=2,3,4$ and
rational Diophantine $m$-tuples for $m=5,6$.
From the integer Diophantine pair $\{2,4\}$ we get $N^{(2)}(p)> 0$ for $p\geq 3$;
from the integer Diophantine triple $\{2,4,12\}$ we get $N^{(3)}(p) >0$ for $p\geq 7$;
from the integer Diophantine quadruples $\{1,3,8,120\}$ and $\{2,24,40,7812\}$
we get $N^{(4)}(p) >0$ for $p\geq 11$ (for any prime $p\geq 11$ at least one
of these two quadruples gives, by the reduction modulo $p$, a Diophantine quadruple
in $\F$);
from the rational Diophantine quintuples
$\{5/16, 21/16, 4, 285/16, 420\}$ and $\{1/5, 21/20, 69/20, 25/4, 96/5\}$
(see \cite{D-quint}) we get
$N^{(5)}(p) >0$ for $p\geq 23$;
from the rational Diophantine sextuples
$\{221/1260, 175/324, 203/180, 81/35, 265/28, 1120/9\}$,
$\{377/1260, 119/180,$ $297/140, 992/315, 175/9, 2275/4\}$,
$\{5/36, 665/1521, 5/4, 32/9, 3213/676, 189/4\}$ (see \cite{Gibbs})
we get $N^{(6)}(p) >0$ for $p\geq 43$.

We can follow the proof of Proposition \ref{prop:triples}
to sketch the proof of the asymptotic formula $N^{(m)}(p)=\ds \frac{1}{2^{m \choose 2 }}\frac{p^m}{m!} + o(p^m)$.
Indeed, we have
$$  m! {2^{m \choose 2 }} N^{(m)}(p) = \sum \prod_{1\leq i <j \leq m}
\left(1+ \left(\frac{a_i a_j+1}{p} \right)' \right), $$
where the sum is taken over all $m$-tuples $a_1,\ldots ,a_m$ of distinct non-zero elements of $\F$.
The main term comes from $\sum 1 = (p-1)(p-2)...(p-m)=p^m + o(p^m)$,
while all other terms are of the form
$$ \sum_{a_1,...,a_{m-1}} \sum_{a_m} \left(\frac{f(a_m)}{p}\right), $$
where $f(x)$ is a non-square polynomial of degree $\leq m-1$ and the sums are taken
over almost all $m$-tuples in $\F$. By Weil's estimate for character sums (see e.g. \cite[Theorem 5.41]{LN}),
we conclude that the contribution of all these terms is $O(p^{m-1}\sqrt{p}) = o(p^m).$

\bigskip

\begin{acknowledgements}
{We would like to thank Ivica Gusi\'c, Filip Najman and Anthony Scholl for some helpful comments.
This work was supported by the QuantiXLie Centre of Excellence, a project
cofinanced by the Croatian Government and European Union through the
European Regional Development Fund - the Competitiveness and Cohesion
Operational Programme (Grant KK.01.1.1.01.0004).
A.D. was supported by the Croatian Science Foundation under the project no. 6422.}
\end{acknowledgements}

\end{document}